\newtheorem{thm}{Theorem}[section]
\newtheorem{cor}[thm]{Corollary}
\newtheorem{lem}[thm]{Lemma}
\newtheorem{prop}[thm]{Proposition}
\theoremstyle{definition}
\newtheorem{ex}[thm]{Example}
\theoremstyle{remark}
\newtheorem{rem}[thm]{Remark}
\numberwithin{equation}{section}
\def\diver{\mathop{\text{\normalfont div}}}
\newcommand{\R}{\mathbb{R}}
\newcommand{\N}{\mathbb{N}}
\newcommand{\ve}{\varepsilon}
\newcommand{\LL}{\mathcal{L}}
\begin{document}

\title[Orlicz eigenvalues]{On the first eigenvalue of the generalized laplacian}

\author[J. Fern\'andez Bonder and A. Salort]{Juli\'an Fern\'andez Bonder and Ariel M. Salort}

\address{Departamento  de Matem\'atica, FCEN -- Universidad de Buenos Aires, and\hfill\break\indent Instituto de C\'alculo -- CONICET\hfill\break\indent $0+\infty$ building, Ciudad Universitaria (1428), Buenos Aires, Argentina.}

\email{{\tt jfbonder@dm.uba.ar, asalort@dm.uba.ar}\hfill\break\indent {\it Web page:} {\tt http://mate.dm.uba.ar/$\sim$jfbonder, http://mate.dm.uba.ar/$\sim$asalort}}

\begin{abstract}
In this work we investigate the energy of minimizers of Rayleigh-type quotients of the form
$$
\frac{\int_\Omega A(|\nabla u|)\, dx}{\int_\Omega A(|u|)\, dx}.
$$

These minimizers are eigenfunctions of the generalized laplacian defined as $\Delta_a u = \diver\left(a(|\nabla u|)\frac{\nabla u}{|\nabla u|}\right)$ where $a(t)=A'(t)$ and the Rayleigh quotient is comparable to the associated eigenvalue. On the function $A$ we only assume that it is a Young function but no $\Delta_2$ condition is imposed. 

Since the problem is not homogeneous, the energy of minimizers is known to strongly depend on the normalization parameter $\alpha =\int_\Omega A(|u|)\, dx$. In this work we precisely analyze this dependence and show differentiability of the energy with respect to $\alpha$ and, moreover, the limits as $\alpha\to 0$ and $\alpha\to \infty$ of the Rayleigh quotient.

The nonlocal version of this problem is also analyzed.
\end{abstract}

\maketitle
\tableofcontents

\section{Introduction}

Let $A\colon \R_+ \to \R_+$ be convex function superlinear at 0 and at $\infty$.  Given an open and bounded set $\Omega\subset \R^n$ and a fixed normalization parameter $\alpha>0$, we consider the {\em energy function} $E(\alpha)$ defined as
\begin{equation} \label{minimi}
E(\alpha) = \inf\left\{\int_\Omega A(|\nabla u|)\, dx \colon u\in C_c^\infty(\Omega), \int_\Omega A(|u|)\, dx = \alpha\right\}.
\end{equation}
Without any further growth assumption on $A$, in \cite{GLMS99,GM02} it is proved that the minimization problem $E(\alpha)$ is solvable, i.e., there exists a function $u_\alpha$ for which the infimum in $E(\alpha)$ is achieved. Furthermore, through a generalized version of the Lagrange's Multiplier Theorem, in \cite{GM02} (see also \cite{T00}) it is proved that $u_\alpha$ satisfies in the weak sense the equation
\begin{align}\label{autov}
\begin{cases}
-\Delta_a u = \lambda(\alpha) \frac{a(|u|)}{|u|} u &\quad \text{ in } \Omega\\
u=0& \quad \text{ in } \partial\Omega,
\end{cases}
\end{align}
where the eigenvalue $\lambda(\alpha)$ is the Lagrange multiplier, $a(t)=A'(t)$ and $\Delta_a u:=\diver\left(\frac{a(|\nabla u|)}{|\nabla u|}\nabla u\right)$ is the so-called \emph{generalized Laplacian} or the \emph{$a-$Laplacian operator}. 

Existence of solutions to \eqref{minimi} and eigenpairs to \eqref{autov} assuming diverse growth behaviors on $A$  were widely studied, see for instance \cite{GLMS99, GM02, M12, MT99,  S22, T00}. These problems, typically are non homogeneous and in general, the eigenvalue $\lambda(\alpha)$ does not coincide with the ratio $E(\alpha)/\alpha$. One known exception arises when the problem is homogeneous, that is, when $A(t)=t^p$ for some $p>1$ and hence  \eqref{autov} turns out to be  the well-known eigenvalue problem for the Dirichlet $p-$Laplace operator.  In this case the first eigenvalue of such a problem admits the variational characterization
$$
\lambda(1)=\lambda(\alpha) =\frac{E(\alpha)}{\alpha}=\inf_{0\neq u\in C_c^\infty(\Omega)} \frac{\int_\Omega |\nabla u|^p dx}{ \int_\Omega |u|^p\, dx}
$$
and it is independent of the normalization parameter $\alpha>0$. For further information, see for instance \cite{L90, L06, L08}.

Although, in general, eigenvalues and energy may not coincide, when the Young function $A$ satisfies the $\Delta_2-$condition (see Section \ref{sec.prel} for the definition), it can be readily proved that the following comparison estimate  
\begin{equation}\label{autov.trivial}
c \lambda(\alpha)\le \frac{E(\alpha)}{\alpha}\le C \lambda(\alpha),
\end{equation}
holds for some  constants $c, C>0$ depending only on $A$. Moreover, in Section \ref{sec.elementary} the following elementary  inequality  is proved to hold:
\begin{equation}\label{cota.trivial}
\min\left\{\alpha^{p-1}, \alpha^{\frac{1}{p}-1}\right\} E(1)\le \frac{E(\alpha)}{\alpha}\le \max\left\{\alpha^{p-1}, \alpha^{\frac{1}{p}-1}\right\} E(1),
\end{equation}
where $p>1$ is a suitable constant depending only on $A$.  However, while this estimate is straightforward, it lacks the depth necessary to provide meaningful insights into the behavior of $E(\alpha)$ as $\alpha\downarrow 0$ and $\alpha\uparrow\infty$. This lack of information serves as the primary motivation for this paper.

Our first results concern the regularity of the curve $E(\alpha)$. In Theorem  \ref{E.Lip}, without any additional assumptions regarding the behavior of $A$, we prove that the energy $E(\alpha)$ is a Lipschitz continuous function of the parameter $\alpha>0$. Moreover, assuming a technical but fairly natural condition on $A$ we are able to obtain its differentiability. More precisely, assuming that the map $t\mapsto ta(t)$ is convex, in Theorem \ref{teo.dif} we prove that $E(\alpha)$ is differentiable for any $\alpha>0$ and the following formulas hold:
\begin{align*}
\frac{dE}{d\alpha}(\alpha)=\lambda(\alpha), \qquad \frac{d}{d\alpha}\left(\frac{E(\alpha)}{\alpha}\right)= \frac{1}{\alpha} \left(\lambda(\alpha)-\frac{E(\alpha)}{\alpha}\right),
\end{align*}
In these expressions, the eigenvalue $\lambda(\alpha)$ and the energy quotient $E(\alpha)/\alpha$ coincide if and only if both quantities are constants independent of $\alpha$. This is precisely the scenario where $A$ is homogeneous.

In Section \ref{sec.asymptotic} we analyze the behavior of the ratio $E(\alpha)/\alpha$ as $\alpha\downarrow 0$ and $\alpha\uparrow \infty$. Our main result establishes that the behavior of that quotient  can be described in terms of the the Matuszewska-Orlicz functions associated to the Young function $A$, which are defined as follows
$$
M_0(t) = \limsup_{\tau\to 0} \frac{A(\tau t)}{A(\tau)}\qquad \text{and} \qquad M_\infty(t) = \limsup_{\tau\to\infty} \frac{A(\tau t)}{A(\tau)},
$$
(see Section \ref{sec.prel} for further information and properties of these function). We highlight that the behavior of the energy quotient only takes into account the local behavior of the Young function $A$ near $0$ and near $\infty$. More precisely, in Theorem \ref{teo1} we prove that if  $A$ satisfies the $\Delta_2$ condition near $i=0,\infty$, then 
$$
\limsup_{\alpha\to i} \frac{E(\alpha)}{\alpha} = \inf_{\tau>0} \frac{E_{i}(\tau)}{\tau} 
$$ 
where
$$
E_{i}(\tau) = \inf\left\{\int_\Omega M_i(|\nabla u|)\, dx\colon u\in C_c^\infty(\Omega), \, \int_\Omega M_i(|u|)\, dx = \tau\right\}.
$$
This result can be refined. We prove that under a fairly general additional assumption on $A$, there exist numbers $p_i\in [1,\infty)$ for which  $M_i(t)=t^{p_i}$, and these result in
 $$
 \lim_{\alpha\to i} \frac{E(\alpha)}{\alpha} = \lambda_{p_i},
 $$
where $\lambda_p$ denotes the first eigenvalue of the $p-$Laplacian with Dirichlet boundary conditions in $\Omega$.
 
Although in the previous result the assumption of the $\Delta_2$ condition on $A$ seems restrictive, in fact this is somehow the interesting case of study since when this condition is removed, in broad terms, the Matuszewska-Orlicz functions associated to the Young function $A$ becomes trivial:
$$
M_i(t)=\begin{cases}
0 & \text{if } t<1\\
1 & \text{if } t=1\\
\infty & \text{if } t>1\\
\end{cases}
$$
for $i=0,\infty$. In this case, under suitable assumptions on the domain, the limit of $E(\alpha)/\alpha$ can be characterized. More precisely, if $\Omega\subset\R^n$ is  such that the inner radius is larger than 1, in Theorem \ref{lim.sin.delta2}  we obtain that 
$$
\lim_{\alpha\to i} \frac{E(\alpha)}{\alpha} = 0.
$$

\medskip

Applying the same technique, we can extend our approach to handle the nonlocal version of \eqref{minimi} within the framework of fractional Orlicz-Sobolev spaces of order $s\in (0,1)$ introduced in \cite{FBS19}. Specifically, for a fixed  $\alpha>0$ we analyze the minimization problem defined by
\begin{equation} \label{minimi.nl.intro}
E^s(\alpha) = \inf\left\{\iint_{\R^{2n}} A\left( \frac{|u(x)-u(y)|}{|x-y|^s}\right)\, d\nu_n \colon u\in C^\infty_c(\Omega), \int_\Omega A(|u|)\, dx = \alpha\right\}
\end{equation}
where we denote  $d\nu_n :=|x-y|^{-n}dxdy$. As  in the local case, without any growth assumption on the Young function $A$, in \cite{SV22}  it is proved the existence of a function $u^s_\alpha$ for which the infimum in \eqref{minimi.nl.intro} is attained. Moreover,  this function satisfies in the weak sense the equation
\begin{align}\label{autov.nl.intro}
\begin{cases}
(-\Delta_a)^s u= \lambda^s(\alpha) \frac{a(|u|)}{|u|} u &\quad \text{ in } \Omega\\
u=0& \quad \text{ in } \partial\Omega,
\end{cases}
\end{align}
where the fractional $a-$Laplacian of order $s\in (0,1)$ is defined as
$$(-\Delta_a)^s u:=\text{p.v.}\, \displaystyle\int_{\R^n} a\left( \frac{|u(x)-u(y)|}{|x-y|^s} \right)\frac{u(x)-u(y)}{|u(x)-u(y)|} \frac{dy}{|x-y|^{n+s}}
$$
and $\lambda^s(\alpha)$ is the Lagrange's multiplier. Problems \eqref{minimi.nl.intro} and \eqref{autov.nl.intro} (and further variants) were widely studied, see for instance \cite{BOS23, BS21, FBSp24, FBS19, FBSV23, S20, SV22}.

\subsection*{Organization of the paper}
After this introduction, in Section \ref{sec.prel} we review the results on Young functions and Orlicz and Orlicz-Sobolev spaces needed in the rest of the article. Most results in this section are well known and in the cases where we were not able to find a precise reference, a full proof is provided.
Then, we include a section (Section \ref{sec.elementary}) where, assuming the $\Delta_2-$condition on $A$, we deduce some elementary estimates on the energy $E(\alpha)$ and on the eigenvalue $\lambda(\alpha)$. In particular, we obtain the estimates \eqref{autov.trivial} and \eqref{cota.trivial}.

Section \ref{sec.regularity} is devoted to the study of the regularity properties of the energy function $E(\alpha)$. The main result being Theorem \ref{teo.dif}.

The behavior of the energy quotient $E(\alpha)/\alpha$ as $\alpha\to 0$ and $\alpha\to \infty$ is analyzed in Sections \ref{sec.asymptotic} and \ref{sec.not.delta2}. In Section \ref{sec.asymptotic} we analyze the more interesting case where $A$ verifies the $\Delta_2-$condition either at 0 or at $\infty$ and in Section \ref{sec.not.delta2} the case in which $A$ does not verifies the $\Delta_2-$condition.

Finally, in Section \ref{sec.nonlocal} we state our results for the nonlocal case.

\section{Preliminaries} \label{sec.prel}

In this section we collect all of the preliminaries needed in this work. Almost all of the results are well known and we try to provide references in all the cases. There are a couple of new, or not so well known results and in those cases full proofs are provided.

The standard reference for this section is the book \cite{KR}.

\subsection{Young functions}
A function $A\colon \R_+\to\R_+$ is called a \emph{Young function} if it can be written as
$$
A(t)=\int_0^t a(\tau)\, d\tau,
$$
where $a\colon \R_+\to\R_+$ is nondecreasing, right continuous, $a(t)>0$ if $t>0$, $a(0)=0$ and $a(t)\to\infty$ as $t\to\infty$. 

From this definition, it can readily be checked that $A$ is convex, $A(0)=0$ and it is superlinear at $0$ and at $\infty$, i.e.
$$
\lim_{t\to 0} \frac{A(t)}{t}=0,\quad \lim_{t\to\infty} \frac{A(t)}{t}=\infty.
$$

Observe that the convexity of the Young function immediately gives that
\begin{equation} \label{convexidad}
A(\tau t) \leq \tau A(t) \quad \text{ for } 0<\tau<1, \qquad 
A(\tau t) \geq \tau A(t) \quad \text{ for } \tau>1.
\end{equation}

Given a Young function $A$, an important associated function is the so-called \emph{complementary function}, denoted by $\bar A$, that is defined as
$$
\bar A(t):=\sup \{\tau t- A(\tau)\colon \tau\geq 0\}.
$$
The complementary function $\bar A$ is also a Young function.

Observe that $\bar A$ is the optimal Young function in the {\em Young-type inequality}
$$
\tau t\le \bar A(\tau) + A(t)
$$
for all $\tau,t\geq 0$. It is also easy to see that $\bar{\bar A} = A$.

We now define the class 
$$
\Delta_2^\infty :=\left\{A\colon \R_+\to\R_+\colon \text{Young function such that }\limsup_{t\to\infty} \frac{A(2t)}{A(t)} < \infty\right\}
$$
and
$$
\Delta_2^0 :=\left\{A\colon \R_+\to\R_+\colon \text{Young function such that }\limsup_{t\to 0} \frac{A(2t)}{A(t)} < \infty\right\}.
$$

It is easy to see that $A\in \Delta_2^\infty$ if and only if there exists constants $C_\infty\ge 2$ and $T_\infty>0$ such that
$$
A(2t)\leq C_\infty A(t) \quad \text{ for } t\geq T_\infty,
$$
and $A\in \Delta_2^0$ if and only if there exists $C_0\ge 2$ and $T_0>0$ such that
$$
A(2t)\leq C_0 A(t) \quad \text{ for } t\leq T_0.
$$
Finally, we define the class 
$$
\Delta_2 = \Delta_2^0\cap \Delta_2^\infty
$$
and $A\in \Delta_2$ if and only if
$$
\sup_{t>0} \frac{A(2t)}{A(t)}<\infty.
$$

In \cite[Theorem 4.1]{KR} it is shown that $A\in \Delta_2^\infty$ (or $A\in \Delta_2^0$, or $A\in \Delta_2$) if and only if there exists $p>1$ such that
\begin{equation}\label{def.p}
\frac{t a(t)}{A(t)}\le p\quad \text{for all } t\ge T_\infty \text{ (or $t\le T_0$, or $t>0$)}.
\end{equation}

In order to analyze the behavior of a Young function $A$ at 0 and at $\infty$, we recall the Matuszewska-Orlicz functions that are defined as
\begin{equation} \label{matu}
M_0(t) = \limsup_{\tau\to 0} \frac{A(\tau t)}{A(\tau)}\qquad \text{and} \qquad M_\infty(t) = \limsup_{\tau\to\infty} \frac{A(\tau t)}{A(\tau)},
\end{equation}
These functions were introduced by Matuszewska and Orlicz in \cite{MO60} and the reader can consult with the book \cite{Maligranda} for the results presented here. For $i=0, \infty$, we have that $M_i$ is convex and sub-multiplicative, that is $M_i\colon \R_+\to [0,\infty]$ verify
$$
M_i(0)=0,\quad M_i(1)=1,\quad M_i(\tau t)\le M_i(\tau)M_i(t).
$$
When $A\in \Delta_2^i$, it follows that $M_i(t)<\infty$ for every $t>0$.


In the case where $A\not\in \Delta_2^i$ for some $i=0,\infty$, the Matuszewska function $M_i$ turns out to be trivial. We were not able to find this result in the literature, so we include a proof of this fact. 
\begin{prop}\label{Minf.trivial}
Let $A$ be a Young function. Then, if $A\not\in \Delta_2^i$, $i=0,\infty$, we have that
$$
M_i(t)=\begin{cases}
1 & \text{if } t=1\\
\infty & \text{if } t>1.
\end{cases}
$$
Moreover, if $A$ satisfies the slightly stronger assumption 
\begin{equation}\label{s.not.delta2}
\liminf_{t\to i} \frac{A(2t)}{A(t)}=\infty,
\end{equation}
then $M_i(t) = 0$  if $0<t<1$.
\end{prop}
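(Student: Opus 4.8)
The plan is to work directly from the definition of $M_i$ as a $\limsup$ of ratios $A(\tau t)/A(\tau)$, using the characterization of failure of the $\Delta_2^i$ condition together with the convexity inequalities \eqref{convexidad}. I will treat the case $i=\infty$ in detail; the case $i=0$ is entirely analogous, replacing ``$\tau\to\infty$'' by ``$\tau\to 0$'' and the monotone sequence $\tau_k\to\infty$ by one tending to $0$.

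First, the value $M_i(1)=1$ is immediate from the definition, and $M_i(t)<\infty$ is impossible for $t>1$: this is the heart of the statement. Suppose, for contradiction, that $M_\infty(t_0)<\infty$ for some $t_0>1$. Pick an integer $m$ with $2\le t_0^m$, equivalently $m\ge \log 2/\log t_0$; then submultiplicativity of $M_\infty$ gives $M_\infty(2)\le M_\infty(t_0^m)\le M_\infty(t_0)^m<\infty$. But $M_\infty(2)=\limsup_{\tau\to\infty}A(2\tau)/A(\tau)<\infty$ is exactly the statement $A\in\Delta_2^\infty$, contradicting the hypothesis. Hence $M_\infty(t)=\infty$ for all $t>1$. (A small point: one should note $M_\infty(t)$ is nondecreasing in $t\ge 0$, which follows since $A$ is nondecreasing, so it suffices to rule out finiteness for $t$ slightly bigger than $1$; then submultiplicativity propagates to all $t>1$ as above. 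Alternatively one argues directly: $A\notin\Delta_2^\infty$ means there is a sequence $\tau_k\to\infty$ with $A(2\tau_k)/A(\tau_k)\to\infty$, and writing $t=2^{1/m}$ for the relevant dyadic-root scale lets one telescope $A(2\tau_k)/A(\tau_k)=\prod_{j=0}^{m-1}A(2^{(j+1)/m}\tau_k)/A(2^{j/m}\tau_k)$, so at least one factor is unbounded along a subsequence, and then monotonicity in $t$ again upgrades this to all $t>1$.)

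For the last assertion, assume the stronger condition \eqref{s.not.delta2}, i.e. $\liminf_{\tau\to\infty}A(2\tau)/A(\tau)=\infty$, and fix $0<t<1$. I want $M_\infty(t)=\limsup_{\tau\to\infty}A(\tau t)/A(\tau)=0$, i.e. $A(\tau t)/A(\tau)\to 0$. Choose $m\in\N$ with $2^{-m}\le t$, hence $t=2^{-m}s$ with $s\ge 1$, and by convexity \eqref{convexidad} (the inequality $A(\sigma u)\le\sigma A(u)$ for $\sigma\le 1$ is not quite what is needed; rather use monotonicity $A(\tau t)=A(2^{-m}s\tau)\le A(2^{-m}\cdot(s\tau))$ wait — instead simply bound $A(\tau t)\le A(2^{-m}\tau')$ with $\tau'=s\tau\ge\tau$). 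The clean route: since $t\le 2^{-m}$ and $A$ is nondecreasing, $A(\tau t)\le A(2^{-m}\tau)$, so it suffices to show $A(2^{-m}\tau)/A(\tau)\to 0$ as $\tau\to\infty$ for each fixed $m$. Write this ratio as $\prod_{j=0}^{m-1} A(2^{-(j+1)}\tau)/A(2^{-j}\tau) = \prod_{j=0}^{m-1}\big(A(2\cdot 2^{-(j+1)}\tau)/A(2^{-(j+1)}\tau)\big)^{-1}$, a product of $m$ reciprocals of terms of the form $A(2\sigma)/A(\sigma)$ with $\sigma=2^{-(j+1)}\tau\to\infty$. Each such term tends to $\infty$ by \eqref{s.not.delta2}, so each reciprocal tends to $0$, and the finite product tends to $0$. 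Therefore $M_\infty(t)=0$ for $0<t<1$, completing the proof.

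The only genuine subtlety — the step I'd flag as the main obstacle — is passing from ``$\Delta_2^i$ fails'' (a $\limsup$ statement, so only along \emph{some} sequence $\tau_k\to i$) to the conclusion that $M_i(t)=\infty$ for \emph{every} $t>1$ rather than just the unboundedness of $M_i(2)$. Submultiplicativity handles the jump from $M_i(2)$ to $M_i(t)$ for $t\ge 2$, and monotonicity of $M_i$ handles $1<t<2$; the telescoping/root trick is what lets one avoid needing the $\liminf$ hypothesis in this first part. For the final ``moreover'' claim the $\liminf$ hypothesis is exactly what is needed to make the finite telescoping product of reciprocals converge to $0$ along the \emph{full} limit $\tau\to i$ rather than merely a subsequence.
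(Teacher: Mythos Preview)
Your argument for $t>1$ is correct and takes a genuinely different route from the paper. The paper uses the integral inequality $A(st)\ge (t-1)sa(s)$ together with the characterization of $\Delta_2^\infty$ via $\limsup_{s\to\infty} sa(s)/A(s)$; you instead rely only on the monotonicity and submultiplicativity of $M_\infty$: if $M_\infty(t_0)<\infty$ for some $t_0>1$, pick $m$ with $t_0^m\ge 2$ and conclude $M_\infty(2)\le M_\infty(t_0^m)\le M_\infty(t_0)^m<\infty$, contradicting $A\notin\Delta_2^\infty$. This is more elementary and avoids any reference to the density $a$.

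For the ``moreover'' clause, however, your ``clean route'' has a gap. You assert ``since $t\le 2^{-m}$'', but for $t\in(1/2,1)$ there is \emph{no} integer $m\ge 1$ with $2^{-m}\ge t$, so the monotonicity bound $A(\tau t)\le A(2^{-m}\tau)$ is simply unavailable there. Your telescoping product establishes $A(2^{-m}\tau)/A(\tau)\to 0$ for each fixed $m\ge 1$, hence $M_\infty(t)=0$ for $0<t\le 1/2$; the range $1/2<t<1$ remains untreated. Your earlier attempt (writing $t=2^{-m}s$ with $s\ge 1$) does not close the gap either, since it introduces a factor $A(s\tau)/A(\tau)$ with $s\in[1,2)$ which need not stay bounded---that would be exactly a $\Delta_2$-type estimate, precisely what is assumed to fail.

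In fairness, the paper's own proof is just as fragile at this step. It writes $\frac{A(st)}{A(s)}=\big(\frac{A((st)(1/t))}{A(st)}\big)^{-1}$ and appeals to ``the limit in \eqref{t>1}'', which tacitly requires $\liminf_{\sigma\to\infty}A(c\sigma)/A(\sigma)=\infty$ for \emph{every} $c>1$, not only $c=2$. This does not follow from \eqref{s.not.delta2} alone: take $a$ constant on dyadic blocks, say $a\equiv 2^{2^n}$ on $[2^n,2^{n+1})$; then one checks $A(2\sigma)/A(\sigma)\to\infty$ but $\liminf_{\sigma\to\infty} A(\tfrac32\sigma)/A(\sigma)\le 3$, whence $M_\infty(2/3)\ge 1/3>0$. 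So the conclusion $M_i(t)=0$ on all of $(0,1)$ actually needs the stronger hypothesis $\liminf_{s\to i} A(cs)/A(s)=\infty$ for every $c>1$ (equivalently $\liminf_{s\to i} sa(s)/A(s)=\infty$); under that assumption both your telescoping and the paper's inversion go through for all $t\in(0,1)$.
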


\begin{proof}
We prove the result for $i=\infty$, the other case being analogous.

For $t>1$ we argue as in \cite[Theorem 4.1]{KR}, to obtain
$$
A(st)\ge \int_s^{st} a(\tau)\, d\tau\ge (t-1)sa(s).
$$
Then,
\begin{equation}\label{t>1}
\limsup_{s\to\infty} \frac{sa(s)}{A(s)} \le \frac{1}{t-1} \limsup_{s\to\infty}\frac{A(st)}{A(s)} = \frac{1}{t-1} M_\infty(t).
\end{equation}
Therefore $A\not\in \Delta_2^\infty$ implies that $M_\infty(t)=\infty$ for $t>1$.

Observe that $M_\infty(1)=1$ by definition.

Now, if \eqref{s.not.delta2} holds, then the limit in \eqref{t>1} exists. And so, for $0<t<1$, we have
$$
\frac{A(st)}{A(s)} = \frac{1}{\frac{A(st \frac{1}{t})}{A(st)}}.
$$
Therefore the result follows by taking the limit in the former expression.
\end{proof}

As we mentioned, if $A\in \Delta_2^i$, then $M_i(t)<\infty$ for every $t>0$. In the case where the limit in the definition of $M_i$ \eqref{matu} exists, these Matuszewska function are power functions. This is the content of the next result.
\begin{prop}\label{M.potencia}
Assume $A\in \Delta_2^i$ for some $i=0,\infty$. Assume moreover that the limit in \eqref{matu} exists. Then, there exists $p_i\in [1,\infty)$ such that
$$
M_i(t)=t^{p_i}.
$$
\end{prop}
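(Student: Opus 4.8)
The plan is to first upgrade the submultiplicativity of $M_i$ (recorded in the preliminaries) to genuine \emph{multiplicativity}, using that under our hypothesis the limit in \eqref{matu} actually exists; then to solve the resulting functional equation; and finally to pin down the exponent using convexity of $M_i$.

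First I would fix $\tau,t>0$ and write, for $\sigma$ close enough to $i$ so that $A(\sigma)$ and $A(\sigma\tau)$ are positive,
\[
\frac{A(\sigma\tau t)}{A(\sigma)} = \frac{A(\sigma\tau t)}{A(\sigma\tau)}\cdot\frac{A(\sigma\tau)}{A(\sigma)}.
\]
Since $\sigma\to i$ forces $\sigma\tau\to i$ (as $\tau>0$ is fixed), the second factor converges to $M_i(\tau)$ by definition, and the first factor converges to $M_i(t)$ upon applying the definition of $M_i$ with $\sigma\tau$ in place of $\sigma$; here it is essential that the limit, not merely the $\limsup$, exists. As the left-hand side converges to $M_i(\tau t)$ for the same reason, we get $M_i(\tau t)=M_i(\tau)M_i(t)$ for all $\tau,t>0$. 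Recall also that $M_i$ is finite on $(0,\infty)$ because $A\in\Delta_2^i$, that $M_i$ is nondecreasing since $A$ is, and that $M_i(1)=1$.

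Next I would solve the multiplicative Cauchy equation. Setting $h(x):=\ln M_i(e^x)$ for $x\in\R$, the relation $M_i(\tau t)=M_i(\tau)M_i(t)$ becomes $h(x+y)=h(x)+h(y)$, and $h$ is nondecreasing, hence bounded on compact intervals. A monotone solution of Cauchy's functional equation is linear, so $h(x)=px$ for some $p\ge 0$, i.e. $M_i(t)=t^{p}$ for all $t>0$. Finally, convexity of $M_i$ together with $M_i(0)=0$ and $M_i(1)=1$ gives, for $0<t<1$, $M_i(t)\le tM_i(1)+(1-t)M_i(0)=t$, so $t^{p}\le t$ on $(0,1)$, which forces $p\ge 1$; this identifies $p_i:=p\in[1,\infty)$ and completes the proof.

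Essentially every step is elementary; the only place requiring a little care is the first one, namely justifying that the limits in the factorization identity may be taken termwise — and this is exactly where the standing assumption that the limit in \eqref{matu} exists enters. Without it one recovers only the submultiplicative inequality $M_i(\tau t)\le M_i(\tau)M_i(t)$, which alone does not force a power law.
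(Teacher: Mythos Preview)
Your proof is correct and follows essentially the same route as the paper's: establish multiplicativity of $M_i$ from the existence of the limit, pass to an additive equation via $h(x)=\ln M_i(e^x)$, and use the convexity bound $M_i(t)\le t$ on $(0,1)$ to force the exponent to be at least~$1$. The only cosmetic difference is that the paper invokes measurability of $h$ rather than monotonicity to solve Cauchy's equation.
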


\begin{proof}
First observe that, by \eqref{convexidad}, it follows that
\begin{equation}\label{cota.M.trivial}
M_i(t)\le t \text{ for } 0<t<1\quad \text{and}\quad M_i(t)\ge t \text{ for } t>1.
\end{equation}

Now, the proof follows just observing that if the limit in \eqref{matu} exists, then $M_i(t)$ is multiplicative, i.e. $M_i(st)=M_i(s)M_i(t)$ and $M_i(1)=1$. 

Then, if we define $v_i\colon \R\to\R$ by $v_i(t)=\ln(M_i(e^t))$, the function $v_i$ is additive, i.e. $v_i(s+t)=v_i(s)+v_i(t)$ for any $s, t\in\R$. It is a well known fact that measurable additive functions are linear, therefore there exists $p_i\in \R$ such that $v_i(t)=p_i t$ from where we can deduce that $M_i(t) = t^{p_i}$. This fact, together with \eqref{cota.M.trivial}, implies that $p_i\ge 1$.
\end{proof}

\subsubsection{Examples}
\begin{ex}
Let $p>1$, and assume that
$$
A(t)\sim t^p \quad\text{when } t\ll 1.
$$
Then $A\in \Delta_2^0$ and $M_0(t)=t^p$.

If we assume that
$$
A(t)\sim t^p \quad\text{when } t\gg 1,
$$
then $A\in \Delta_2^\infty$ and $M_\infty(t)=t^p$.

As a special case we have the $(p,q)-$Laplacian: if $1<p<q<\infty$
$$
A(t)=\frac{t^p}{p}+ \frac{t^q}{q},\quad \text{then } A\in \Delta_2 \quad \text{and } M_0(t)=t^p, \quad    M_\infty(t)=t^q.
$$
\end{ex}

\begin{ex} Let $\alpha\ge 0$ and $p\ge 1$. Then if
$$
A(t)\sim t^p \ln^\alpha t \quad \text{when } t\ll 1,
$$
then $A\in \Delta_2^0$ and $M_0(t)=t^p$. If
$$
A(t)\sim t^p \ln^\alpha t \quad \text{when } t\gg 1,
$$
then $A\in \Delta_2^\infty$ and $M_\infty(t)=t^p$.

As a special case, if
$$
A(t)=\frac{t^p}{p}\ln^\alpha(1+t^r),
$$
for some $r>0$, then $A\in \Delta_2$ and $M_0(t)=t^{p+r\alpha}$, $M_\infty(t)=t^{p}$.
\end{ex}

\begin{ex} For $\alpha, \beta\ge 0$ and $p\ge1$, assume that
$$
A(t)\sim t^p \ln^\alpha(t) \ln^\beta(\ln t)\quad \text{when } t\gg 1.
$$
Then $A\in \Delta_2^\infty$ and $M_\infty(t)=t^p$.
\end{ex}

\begin{ex} For $n\in\N$, define $A(t)=e^t - \sum_{k=0}^{n-1} \frac{t^k}{k!}$. Then $A\in \Delta_2^0$, but $A\not\in \Delta_2^\infty$. Moreover,
$$
M_0(t)=t^n \quad\text{and}\quad M_\infty(t)=\begin{cases}
0 & \text{if } 0<t<1\\
1 & \text{if } t=1\\
\infty & \text{if } t>1.
\end{cases}
$$
\end{ex}

\begin{ex} For $\alpha>0$, assume that
$$
A(t)\sim e^{-t^{-\alpha}}\quad \text{when } t\ll 1.
$$
Then $A\not\in \Delta_2^0$ and $M_0(t)=\begin{cases}
0 & \text{if } 0<t<1\\
1 & \text{if } t=1\\
\infty & \text{if } t>1.
\end{cases}$
\end{ex}

\begin{ex} Assume that
$$
A(t)\sim e^{e^t} \quad \text{when } t\gg 1.
$$
Then, $A\not\in \Delta_2^\infty$ and $M_\infty(t)=\begin{cases}
0 & \text{if } 0<t<1\\
1 & \text{if } t=1\\
\infty & \text{if } t>1.
\end{cases}$
\end{ex}

Observe that all of the examples in this subsection satisfy the hypotheses of Proposition \ref{Minf.trivial} or Proposition \ref{M.potencia}.

\subsection{Orlicz and Orlicz-Sobolev spaces}
As we mentioned in the introduction on this section, the main reference for Orlicz spaces is the book \cite{KR}. As for Orlicz-Sobolev spaces, the reader can consult, for instance, with \cite{Gossez}. 

Fractional order Orlicz-Sobolev spaces, as we will use them here, were introduced in \cite{FBS19} and then further analyze by several authors. The results used in this paper are found in \cite{ACPS2, ACPS} and in \cite{FBSp24}.

\subsubsection{Orlicz spaces}
Given $\Omega\subset \R^n$ a bounded domain and a Young function $A$, the Orlicz class is defined as
$$
\LL^A(\Omega) :=\left\{u\in L^1_\text{loc}(\Omega)\colon \int_\Omega A(|u|)\, dx<\infty\right\}.
$$ 
Then the Orlicz space $L^A(\Omega)$ is defined as the linear hull of $\LL^A(\Omega)$ and is characterized as
$$
L^A(\Omega) = \left\{u\in L^1_\text{loc}(\Omega)\colon \text{ there exists } k>0 \text{ such that }  \int_\Omega A\left(\frac{|u|}{k}\right)\, dx<\infty\right\}.
$$
In general the Orlicz class is strictly smaller than the Orlicz space, and $\LL^A(\Omega) = L^A(\Omega)$ if and only if $A\in \Delta_2^\infty$.

The space $L^A(\Omega)$ is a Banach space when it is endowed, for instance, with the {\em Luxemburg norm}, i.e.
$$
\|u\|_{L^A(\Omega)} = \|u\|_A :=\inf\left\{k>0\colon  \int_\Omega A\left(\frac{|u|}{k}\right)\, dx\le 1\right\}.
$$

This space $L^A(\Omega)$ turns out to be separable if and only if $A\in \Delta_2^\infty$.

An important subspace of $L^A(\Omega)$ is $E^A(\Omega)$ that it is defined as the closure of the functions in $L^A(\Omega)$ that are bounded. This space is characterized as
$$
E^A(\Omega) = \left\{u\in L^1_\text{loc}(\Omega)\colon  \int_\Omega A\left(\frac{|u|}{k}\right)\, dx<\infty \text{ for all } k>0\right\}.
$$

This subspace $E^A(\Omega)$ is separable, and we have the inclusions
$$
E^A(\Omega)\subset \LL^A(\Omega)\subset L^A(\Omega)
$$
with equalities if and only if $A\in \Delta_2^\infty$.

The following duality relation holds
$$
(E^A(\Omega))^* = L^{\bar A}(\Omega),
$$
where the equality is understood  via the standard duality pairing. Observe that this automatically implies that $L^A(\Omega)$ is reflexive if and only if $A, \bar A\in \Delta_2^\infty$.

In the sequel, we will need a well known lemma (see for instance \cite{KR}), but we include the details for completeness.
\begin{lem}\label{cota.prelim}
Let $A$ be a Young function and $u\in L^A(\Omega)$. Then
$$
\|u\|_A\le \max\left\{1; \int_\Omega A(|u|)\, dx\right\}.
$$
\end{lem}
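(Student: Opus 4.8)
The plan is a short computation relying only on the definition of the Luxemburg norm and the convexity inequality \eqref{convexidad}; I do not expect any genuine obstacle. Write $\rho(u):=\int_\Omega A(|u|)\,dx$ for the modular of $u$. First I would dispose of the trivial case $\rho(u)=\infty$, in which the asserted inequality holds automatically because its right-hand side is $+\infty$; so from now on one may assume $\rho(u)<\infty$. I would then recall that, in order to prove $\|u\|_A\le k$ for a candidate $k>0$, it suffices to verify the single modular bound $\int_\Omega A(|u|/k)\,dx\le 1$, since $\|u\|_A$ is by definition the infimum of all such admissible $k$.

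Next I would split into two cases according to the size of $\rho(u)$. If $\rho(u)\le 1$, the choice $k=1$ works directly, since $\int_\Omega A(|u|)\,dx=\rho(u)\le 1$, giving $\|u\|_A\le 1$. If $\rho(u)>1$, I would take $k:=\rho(u)$, so that $0<1/k<1$, and apply the first inequality in \eqref{convexidad} pointwise with $\tau=1/k$ and $t=|u(x)|$ to obtain $A(|u(x)|/k)\le \tfrac1k A(|u(x)|)$ for a.e.\ $x\in\Omega$; integrating over $\Omega$ then yields
$$
\int_\Omega A\!\left(\frac{|u|}{k}\right)dx \le \frac1k\int_\Omega A(|u|)\,dx = \frac{\rho(u)}{\rho(u)} = 1,
$$
hence $\|u\|_A\le k=\rho(u)$. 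Combining the two cases gives $\|u\|_A\le\max\{1;\rho(u)\}$, as claimed.

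The only point worth underlining is that the critical normalization $k=\rho(u)$ is precisely the one that forces the modular to equal $1$; beyond this, everything reduces to the elementary scaling $A(\tau t)\le\tau A(t)$ for $\tau\in(0,1)$, which is immediate from the convexity of $A$ together with $A(0)=0$. I therefore expect no serious difficulty in carrying out the argument.
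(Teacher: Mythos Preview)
Your proof is correct and follows essentially the same approach as the paper: both arguments reduce to the convexity inequality \eqref{convexidad} together with the definition of the Luxemburg norm. The only cosmetic difference is that the paper assumes $\|u\|_A>1$ and deduces $\int_\Omega A(|u|)\,dx\ge\|u\|_A$, whereas you assume $\rho(u)>1$ and directly exhibit $k=\rho(u)$ as an admissible constant in the Luxemburg infimum; these are the same computation read in opposite directions.
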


\begin{proof}
We can assume that $u\in \LL^A(\Omega)$, otherwise there is nothing to prove. Also, we can assume that $\|u\|_A >1$.
Then,
$$
\int_\Omega A(|u|)\,dx = \int_\Omega A\left(\frac{|u|}{\|u\|_A} \|u\|_A\right)\,dx \geq \|u\|_A,
$$
where we have used \eqref{convexidad} and the definition of the Luxemburg norm. 
\end{proof}

\subsubsection{Orlicz-Sobolev spaces}
Once the Orlicz spaces are defined, one can easily define the Orlicz-Sobolev spaces in the usual way, namely
$$
W^1L^A(\Omega) = \{u\in L^A(\Omega)\colon \partial_{x_i}u\in L^A(\Omega), i=1,\dots,n\}
$$
and
$$
W^1E^A(\Omega) = \{u\in E^A(\Omega)\colon \partial_{x_i}u\in E^A(\Omega), i=1,\dots,n\}.
$$
These two spaces coincide only if $A\in \Delta_2^\infty$ and in this case we will denote the Orlicz-Sobolev space as
$$
W^{1, A}(\Omega) = W^1L^A(\Omega) = W^1E^A(\Omega).
$$

These spaces are endowed with the natural norm
$$
\|u\|_{W^1L^A(\Omega)} = \|u\|_{1, A} = \|u\|_A + \|\nabla u\|_A.
$$
With this norm, $W^1L^A(\Omega)$ is a Banach space and $W^1E^A(\Omega)$ is a closed subspace.

In order to work with Dirichlet boundary conditions it is necessary to define the Orlicz-Sobolev functions that vanish at the boundary. So, we define $W^1_0L^A(\Omega)$ as the closure of $C^\infty_c(\Omega)$ with respect to the topology $\sigma(W^1L^A(\Omega), W^1E^{\bar A}(\Omega))$ and $W^1_0 E^A(\Omega)$ as the closure of $C^\infty_c(\Omega)$ in norm topology. 

Again, when $A\in \Delta_2^\infty$ these spaces coincide and will be denoted as
$$
W^{1, A}_0(\Omega) = W^1_0 L^A(\Omega) = W^1_0 E^A(\Omega).
$$
These spaces are reflexive if and only if both $A, \bar A\in \Delta_2^\infty$. In any other case, what we have is that there exists a Banach space $X$ such that $X^* = W^1_0 L^A(\Omega)$. This Banach space $X$ can be easily characterized in terms of distributions, but it will not be used in this article.

In \cite{GLMS99} the following is proved.
\begin{prop} \label{embeddings}
Let $A$ be a Young function. Then the embedding $W^1_0 L^A(\Omega) \subset L^A(\Omega)$ is compact.
\end{prop}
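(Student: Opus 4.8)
The plan is to prove the compactness of the embedding $W^1_0 L^A(\Omega) \hookrightarrow L^A(\Omega)$ by reducing to the classical Rellich–Kondrachov theorem via the linear growth (superlinearity at $0$) of the Young function $A$. Since $A$ is superlinear at $0$ and at $\infty$, a standard fact is that $A$ dominates a linear function away from the origin, i.e. there exist constants $c>0$ and $t_0>0$ with $A(t)\ge c\,t$ for $t\ge t_0$, and consequently there is a constant $K=K(A,|\Omega|)$ such that $\|u\|_{L^1(\Omega)}\le K\|u\|_{L^A(\Omega)}$ for all $u\in L^A(\Omega)$; this is the elementary inclusion $L^A(\Omega)\subset L^1(\Omega)$ for bounded $\Omega$. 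Similarly $W^1_0 L^A(\Omega)\subset W^{1,1}_0(\Omega)$ continuously. Thus a bounded sequence $(u_k)$ in $W^1_0 L^A(\Omega)$ is bounded in $W^{1,1}_0(\Omega)$, and by Rellich–Kondrachov it has a subsequence, still denoted $(u_k)$, converging strongly in $L^1(\Omega)$ and a.e. to some $u$.

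First I would upgrade this $L^1$-convergence to convergence in the Orlicz norm. The key point is that boundedness of $(u_k)$ in $W^1_0 L^A(\Omega)$ gives, in particular, a uniform bound $\sup_k \int_\Omega A(|\nabla u_k|)\,dx \le C$ (after normalizing the norm bound so that the gradients lie in the unit modular ball, using Lemma \ref{cota.prelim} and \eqref{convexidad}), hence by a Poincaré-type modular inequality in Orlicz spaces — itself a consequence of the representation $u(x)=\int$ of a $C_c^\infty$ function in terms of its gradient together with convexity of $A$ — one also controls $\sup_k \int_\Omega A(|u_k|/R)\,dx$ for a fixed large $R$. This uniform modular bound, combined with the a.e. convergence $u_k\to u$, yields equi-integrability of the family $\{A(|u_k-u|/2R)\}$: indeed on any set $E$ of small measure, $\int_E A(|u_k-u|/2R)\,dx$ is small uniformly in $k$, because $A(|u_k-u|/2R)\le \tfrac12 A(|u_k|/R)+\tfrac12 A(|u|/R)$ by convexity and the latter two are uniformly integrable (the first by the modular bound plus a standard Dunford–Pettis / de la Vallée-Poussin argument using that $A$ is a Young function, the second since $u\in L^A(\Omega)$).

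With equi-integrability of $\{A(|u_k-u|/2R)\}$ and the pointwise convergence $A(|u_k-u|/2R)\to 0$ a.e. in hand, Vitali's convergence theorem gives $\int_\Omega A(|u_k-u|/2R)\,dx\to 0$, and in particular this integral is $\le 1$ for $k$ large, so $\|u_k-u\|_{L^A(\Omega)}\le 2R\to$ — wait, more carefully: for any fixed $\ve>0$ one runs the same argument with $R$ replaced by $\ve$, obtaining $\int_\Omega A(|u_k-u|/\ve)\,dx\to 0$ hence $\le 1$ eventually, which by definition of the Luxemburg norm gives $\|u_k-u\|_{L^A(\Omega)}\le \ve$ for $k$ large; since $\ve$ is arbitrary, $u_k\to u$ in $L^A(\Omega)$. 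Finally $u\in L^A(\Omega)$ follows from Fatou's lemma applied to $A(|u_k|)$. The main obstacle is the equi-integrability step: one must carefully extract, from the single modular bound $\int_\Omega A(|\nabla u_k|)\,dx\le C$, a Poincaré inequality valid in the non-$\Delta_2$ setting (so that no norm-modular equivalence is available) and then combine it with the convexity splitting above; handling the scaling parameter $R$ versus $\ve$ and making sure the de la Vallée-Poussin criterion applies with $A$ itself as the Young majorant is the delicate bookkeeping. Everything else is a routine application of Rellich–Kondrachov and Vitali's theorem.
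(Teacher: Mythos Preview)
The paper does not prove this proposition itself; it simply cites \cite{GLMS99}. Your outline contains a genuine gap, precisely at the point you yourself flag as ``delicate bookkeeping'': the replacement of the fixed scale $R$ by an arbitrary $\ve>0$ cannot be justified from the ingredients you list. The Poincar\'e-type modular inequality only yields $\sup_k\int_\Omega A(|u_k|/R)\,dx<\infty$ for one fixed $R$ (of the order of the diameter of $\Omega$). Without the $\Delta_2$ condition, this gives no control whatsoever on $\int_\Omega A(|u_k|/\ve)\,dx$ for $\ve<R$; indeed that quantity can be $+\infty$ for functions in $\LL^A(\Omega)\setminus E^A(\Omega)$. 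Consequently your convexity splitting $A(|u_k-u|/\ve)\le \tfrac12 A(2|u_k|/\ve)+\tfrac12 A(2|u|/\ve)$ produces a right-hand side that need not be equi-integrable (nor even integrable), so Vitali does not apply and your argument only delivers $\limsup_k\|u_k-u\|_A\le 2R$, not convergence to $0$. There is also a smaller gap already at the fixed scale: a uniform $L^1$ bound on $\{A(|u_k|/R)\}$ is \emph{not} the de~la~Vall\'ee-Poussin criterion for equi-integrability of that family; the criterion would require $\sup_k\int_\Omega\Phi(A(|u_k|/R))\,dx<\infty$ for some superlinear $\Phi$, which you have not produced.

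The missing idea, and the substantive content of the cited reference, is the Orlicz--Sobolev embedding of Donaldson--Trudinger type: $W^1_0 L^A(\Omega)$ embeds continuously into $L^{A_*}(\Omega)$ for the Sobolev conjugate $A_*$, and since $A$ increases essentially more slowly than $A_*$ one obtains $W^1_0 L^A(\Omega)\subset E^A(\Omega)$ with uniform modular bounds $\sup_k\int_\Omega A(|u_k|/\ve)\,dx<\infty$ at \emph{every} scale $\ve>0$. This Sobolev gain is exactly what supplies the equi-integrability needed to run Vitali for arbitrary $\ve$. Your reduction to $W^{1,1}_0(\Omega)$ and Rellich--Kondrachov is the right opening move, but the upgrade from $L^1$ to $L^A$ convergence genuinely requires the embedding into a strictly larger Orlicz space, not merely Poincar\'e at a single scale.
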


\subsubsection{Fractional Orlicz-Sobolev spaces}
The construction of the fractional order Orlicz-Sobolev spaces is almost the same as the Orliz-Sobolev spaces. We will make only a sketch and the reader can consult with \cite{FBSp24}.

Given a fractional parameter $s\in (0,1)$, we define the H\"older quotient of a function $u\in L^A (\Omega)$ as
$$
D^su(x,y) = \frac{u(x)-u(y)}{|x-y|^s}.
$$
Then, the fractional Orlicz-Sobolev space of order $s$ is defined as
$$
W^sL^A(\R^n) := \{u\in L^A(\R^n)\colon D^su\in L^A(\R^{2n}, d\nu_n)\},
$$
where $d\nu_n = |x-y|^{-n}dxdy$ and
$$
W^sE^A(\R^n) := \{u\in E^A(\R^n)\colon D^su\in E^A(\R^{2n}, d\nu_n)\}.
$$
Since we are now working in the whole space, in order for these spaces to coincide we require that $A\in \Delta_2$. In this case, we denote
$$
W^{s, A}(\R^n) =W^sL^A(\R^n)= W^sE^A(\R^n).
$$
As in the previous case, we have that $W^s L^A(\R^n)$ is reflexive if and only if $A, \bar A\in \Delta_2$.

In these spaces the norm considered is
$$
\|u\|_{W^sL^A(\R^n)} = \|u\|_{s, A} = \|u\|_A + \|D^su\|_{A, d\nu_n}.
$$
Again, with this norm, $W^sL^A(\R^n)$ is a Banach space and $W^sE^A(\R^n)$ is a closed subspace.

The space $W^s_0 L^A(\Omega)$ is then defined as the closure of $C^\infty_c(\Omega)$ with respect to the topology $\sigma(W^sL^A(\R^n), W^sE^{\bar A}(\R^n))$ and $W^s_0 E^A(\Omega)$ as the closure of $C^\infty_c(\Omega)$ in norm topology.

Also, in \cite{FBSp24} it is shown that for any $A$, there exists a Banach space $X$ such that $X^*= W^s_0 L^A(\Omega)$ and that the extension of Proposition \ref{embeddings} to the fractional case holds:
\begin{prop}\label{embeddings-s}
Let $A$ be a Young function. Then the embedding $W^s_0 L^A(\Omega) \subset L^A(\Omega)$ is compact.
\end{prop}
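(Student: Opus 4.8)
The plan is to establish the compactness by a Rellich-type argument that parallels, in the fractional setting, the proof of Proposition~\ref{embeddings}. Let $\{u_k\}_k\subset W^s_0 L^A(\Omega)$ be bounded; after rescaling we may assume $\|u_k\|_{s,A}\le 1$, so that by the definition of the Luxemburg norm we have the two modular bounds
$$
\int_\Omega A(|u_k|)\,dx\le 1,\qquad \iint_{\R^{2n}} A(|D^su_k|)\,d\nu_n\le 1.
$$
The goal is to extract a subsequence converging in the $L^A(\Omega)$-norm.

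The first step is precompactness in $L^1(\Omega)$. Since $A$ is superlinear at infinity there are $c_1>0$ and $t_1\ge 0$ with $A(t)\ge c_1 t$ for $t\ge t_1$, hence $t\le t_1+c_1^{-1}A(t)$ for all $t\ge0$; integrating over $\Omega$ gives a uniform bound for $\|u_k\|_{L^1(\Omega)}$. For equicontinuity of translations, a direct computation—writing $|u_k(x)-u_k(z)|=|x-z|^s|D^su_k(x,z)|$, using the pointwise inequality $|t|\le C(1+A(t))$, and crucially exploiting that $D^su_k$ vanishes off $(\bar\Omega\times\R^n)\cup(\R^n\times\bar\Omega)$ to discard the otherwise divergent contributions of the weight $d\nu_n$—bounds $\int_{B_\rho}\|u_k(\cdot+h)-u_k\|_{L^1(\R^n)}\,dh$ by $C\rho^{n+s}$ uniformly in $k$. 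Mollifying, $\|u_k-u_k*\phi_\rho\|_{L^1}\le C\rho^s$ uniformly in $k$, while $\{u_k*\phi_\rho\}_k$ is precompact in $L^1$ for each fixed $\rho$ (uniformly bounded and Lipschitz, supported in a fixed bounded set). A standard total-boundedness argument then yields a subsequence, still denoted $u_k$, with $u_k\to u$ in $L^1(\Omega)$ and a.e.; moreover $u\in W^s_0 L^A(\Omega)$ (by weak-$*$ compactness of bounded sets in this dual space and uniqueness of limits) and, by Fatou, $\int_\Omega A(|u|)\,dx\le 1$.

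The heart of the matter is upgrading a.e.\ convergence to $L^A$-norm convergence—this is where the absence of a $\Delta_2$ hypothesis bites, since then $L^A(\Omega)\ne E^A(\Omega)$ and modular convergence is strictly weaker than norm convergence. Here I would invoke a (non-optimal) fractional Orlicz--Sobolev embedding from \cite{ACPS2,ACPS}: $W^s_0 L^A(\Omega)$ embeds continuously into a rearrangement-invariant target $Y(\Omega)$ whose growth essentially exceeds that of $A$—the space $L^{A_*}(\Omega)$ with $A_*$ the Sobolev conjugate in the subcritical range, an exponential-type Orlicz space in the Trudinger range, and $L^\infty(\Omega)$ in the supercritical range. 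In every case, boundedness of $\{u_k-u\}_k$ in $Y(\Omega)$ together with $A\ll A_*$ (or the trivial bound on a bounded domain in the $L^\infty$ case) yields equi-absolute continuity of $\{A(\lambda|u_k-u|)\}_k$ for each fixed $\lambda>0$. Then, for fixed $\lambda$, split $\int_\Omega A(\lambda|u_k-u|)\,dx$ over $\{|u_k-u|\le T\}$ and its complement: the first integral tends to $0$ by dominated convergence (dominated by $A(\lambda T)$, integrand $\to 0$ a.e.), while the second is $\le\ve$ for $T$ large uniformly in $k$, using $|\{|u_k-u|>T\}|\le\|u_k-u\|_{L^1}/T$ and the equi-integrability just obtained. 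Hence $\int_\Omega A(\lambda|u_k-u|)\,dx\to0$ for every $\lambda>0$, and Lemma~\ref{cota.prelim} applied to $\lambda(u_k-u)$ (its modular being eventually $\le 1$) converts this into $\|u_k-u\|_A\to0$.

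The main obstacle is exactly this last passage together with its Sobolev-embedding input: without $\Delta_2$ one cannot pass from modular bounds plus a.e.\ convergence to norm convergence by soft means, and one must quantify the rigidity carried by the fractional seminorm through a Sobolev-type inequality, with some care in the borderline Trudinger/Pohozaev regime. By contrast, Step~1 poses only the minor technical nuisance of the non-integrable diagonal weight $d\nu_n$, absorbed by the compact-support reduction (or, alternatively, by first passing to $W^{\sigma,1}_0(\Omega)$ for $\sigma\in(0,s)$ and invoking the classical fractional Rellich theorem).
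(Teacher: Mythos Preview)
The paper does not actually prove this proposition---it simply refers to \cite{FBSp24} for the result (see the sentence immediately preceding the statement). So there is no in-paper argument to compare against; what follows is an assessment of your sketch on its own merits.

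Your two-step outline is the standard route and is essentially sound. Step~1 is correct: the support restriction to $(\bar\Omega\times\R^n)\cup(\R^n\times\bar\Omega)$ is exactly what tames the non-integrable weight $d\nu_n$, and the mollification argument (or the $W^{\sigma,1}_0$ shortcut you mention) gives $L^1$-precompactness cleanly. The deduction at the end of Step~2---that $\int_\Omega A(\lambda|u_k-u|)\,dx\to 0$ for every $\lambda>0$ forces $\|u_k-u\|_A\to 0$---is also correct, though it follows directly from the definition of the Luxemburg norm rather than from Lemma~\ref{cota.prelim} (that lemma only yields the bound $\le 1$, but since $\lambda$ is arbitrary the conclusion is the same).

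The one place requiring care is your appeal to the embeddings of \cite{ACPS2,ACPS}. Those results are formulated under an integrability condition on $A$ near the origin (needed to define the fractional Sobolev conjugate), while the proposition here is stated for an \emph{arbitrary} Young function. On a bounded domain this is harmless---one may replace $A$ near $0$ by a power without changing $L^A(\Omega)$ or the relevant modular bounds on compactly supported functions---but this normalization should be made explicit. Once it is, the trichotomy you describe (subcritical target $A_*$ with $A\ll A_*$; Trudinger-type exponential target; $L^\infty$ target in the supercritical regime) does exhaust all cases, and in each of them the equi-integrability of $\{A(\lambda|u_k-u|)\}_k$ follows as you indicate. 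So your argument is correct modulo this routine normalization and is in line with how such compactness results are established in the cited references; it is simply more than what the present paper does, which is to quote the result.
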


\subsection{A review on the existence of minimizers for the energy function}

Given a Young function $A$ without any further assumption on its behavior,  in  \cite[Theorem 3.1]{GLMS99} (see also \cite[Theorem 4.1]{GM02}) it is proved that for any election of the normalization $\alpha>0$, the minimization problem $E(\alpha)$ is solvable:

\begin{prop} \label{mini.prop}
Let $E(\alpha)$ be the energy function defined in \eqref{minimi}. Then for every $\alpha>0$, there exists a function $u_\alpha\in W^1_0L^A(\Omega)$ such  that $E(\alpha)=\int_\Omega A(|\nabla u_\alpha|)\, dx$ and $\int_\Omega A(|u_\alpha|)\,dx=\alpha$.
\end{prop}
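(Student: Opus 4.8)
The plan is a standard application of the direct method of the calculus of variations, the only difficulties arising because $A$ is not assumed to satisfy a $\Delta_2$ condition, so that one works in the non-reflexive space $W^1_0L^A(\Omega)$ equipped with its weak-$*$ topology. Fix $\alpha>0$ and choose a minimizing sequence $u_k\in C_c^\infty(\Omega)$ for \eqref{minimi}, so that $\int_\Omega A(|u_k|)\,dx=\alpha$ for every $k$ and $\int_\Omega A(|\nabla u_k|)\,dx\to E(\alpha)$; in particular $M:=\sup_k\int_\Omega A(|\nabla u_k|)\,dx<\infty$. Applying Lemma \ref{cota.prelim} to $u_k$ and to each $\partial_{x_i}u_k$ gives $\|u_k\|_A\le\max\{1,\alpha\}$ and $\|\nabla u_k\|_A\le\max\{1,M\}$, so $\{u_k\}$ is bounded in $W^1_0L^A(\Omega)$.

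Since $W^1_0L^A(\Omega)=X^*$ for a Banach space $X$ (its predual being separable), Banach--Alaoglu provides a subsequence, not relabeled, converging in $\sigma(W^1L^A,W^1E^{\bar A})$ to some $u_\alpha\in W^1_0L^A(\Omega)$. By the compact embedding of Proposition \ref{embeddings}, after a further extraction $u_k\to u_\alpha$ strongly in $L^A(\Omega)$; since $|\Omega|<\infty$ this also gives $u_k\to u_\alpha$ in $L^1(\Omega)$ and, after one more extraction, a.e.\ in $\Omega$. A key observation is that, being a Luxemburg-norm limit of smooth functions, $u_\alpha$ lies in $E^A(\Omega)$; equivalently $\int_\Omega A(\lambda|u_\alpha|)\,dx<\infty$ for every $\lambda>0$, and it is this fact that will rescue the equality constraint.

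Next I would establish the lower semicontinuity bound $\int_\Omega A(|\nabla u_\alpha|)\,dx\le\liminf_k\int_\Omega A(|\nabla u_k|)\,dx=E(\alpha)$. Because $A$ is superlinear at infinity and $\int_\Omega A(|\nabla u_k|)\,dx\le M$, the sequence $\{\nabla u_k\}$ is uniformly integrable, hence relatively weakly compact in $L^1(\Omega;\R^n)$ by Dunford--Pettis, and its weak $L^1$-limit is necessarily $\nabla u_\alpha$; the classical lower semicontinuity of integral functionals with nonnegative convex integrand then yields the inequality. (Alternatively, one writes $\int_\Omega A(|\nabla u_k|)\,dx\ge\int_\Omega\nabla u_k\cdot\Phi\,dx-\int_\Omega\bar A(|\Phi|)\,dx$ for $\Phi\in C_c^\infty(\Omega;\R^n)$ via the Young inequality, passes to the weak-$*$ limit using that $u\mapsto\int_\Omega\nabla u\cdot\Phi\,dx$ is $\sigma(W^1L^A,W^1E^{\bar A})$-continuous, and then maximizes over $\Phi$.)

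Finally, Fatou's lemma and the a.e.\ convergence give $\int_\Omega A(|u_\alpha|)\,dx\le\liminf_k\int_\Omega A(|u_k|)\,dx=\alpha$. For the converse I would use the strong $L^A$-convergence: $\|u_k-u_\alpha\|_A\to0$ implies $\int_\Omega A(\lambda|u_k-u_\alpha|)\,dx\to0$ for every $\lambda>0$, and splitting $|u_k|\le|u_\alpha|+|u_k-u_\alpha|$ according to whether $|u_k-u_\alpha|\le\theta|u_\alpha|$ or not and invoking convexity gives, for each $\theta\in(0,1)$, the bound $\limsup_k\int_\Omega A(|u_k|)\,dx\le\int_\Omega A((1+\theta)|u_\alpha|)\,dx$. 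Since $u_\alpha\in E^A(\Omega)$ the right-hand side is finite and converges to $\int_\Omega A(|u_\alpha|)\,dx$ as $\theta\downarrow0$, whence $\int_\Omega A(|u_\alpha|)\,dx=\alpha$. Combined with the previous paragraph this shows that $u_\alpha$ satisfies the constraint and $\int_\Omega A(|\nabla u_\alpha|)\,dx\le E(\alpha)$; since $E(\alpha)$ also equals the infimum of the Dirichlet modular over all of $W^1_0L^A(\Omega)$ under the constraint $\int_\Omega A(|u|)\,dx=\alpha$ (a density argument, cf.\ \cite{GLMS99,GM02}) and $u_\alpha$ is admissible for that problem, the reverse inequality follows and $u_\alpha$ is the desired minimizer. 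The step I expect to be the main obstacle is precisely the handling of the equality constraint: since $\Delta_2$ is not assumed, modular convergence does not follow from Luxemburg-norm convergence in general, and what makes the argument close is that the compactness furnished by Proposition \ref{embeddings} is with respect to the \emph{norm}, forcing the limit into $E^A(\Omega)$ --- this is the delicate point treated in detail in \cite{GLMS99,GM02}.
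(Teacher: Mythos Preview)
The paper does not give its own proof of this proposition; it simply quotes the result from \cite[Theorem~3.1]{GLMS99} and \cite[Theorem~4.1]{GM02}. Your sketch is a correct account of the direct-method argument used in those references, and the paper itself reuses exactly these ingredients in the proof of Lemma~\ref{lema.lsc}: boundedness via Lemma~\ref{cota.prelim}, weak-$*$ sequential compactness in the dual space $W^1_0L^A(\Omega)$, the compact embedding of Proposition~\ref{embeddings} to force strong $L^A$-convergence, and lower semicontinuity of the Dirichlet modular (there cited as \cite[Lemma~2.2]{GLMS99}). Your treatment of the equality constraint---observing that norm convergence to zero yields $\int_\Omega A(\lambda|u_k-u_\alpha|)\,dx\to0$ for every $\lambda>0$, splitting on $\{|u_k-u_\alpha|\le\theta|u_\alpha|\}$, and using $u_\alpha\in E^A(\Omega)$ to let $\theta\downarrow0$---is correct and is precisely the delicate point that distinguishes the general Orlicz setting from the $\Delta_2$ case; the final density step (that the infimum over $C_c^\infty(\Omega)$ coincides with that over $W^1_0L^A(\Omega)$) which you defer to \cite{GLMS99,GM02} is indeed where the remaining technical work lies.
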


As mentioned in the introduction, if $A, \bar A\in \Delta_2$ it is straightforward to see that by means of the Lagrange's multiplier rule there exists $\lambda(\alpha)$ such that
\begin{equation} \label{form.debil}
\int_\Omega a(|\nabla u_\alpha|)\frac{\nabla u_\alpha \cdot \nabla v}{|\nabla u_\alpha|} \,dx = \lambda (\alpha) \int_\Omega a(|u_\alpha|)\frac{u_\alpha v}{|u_\alpha|}\,dx \qquad \text{ for all } v\in C_c^\infty(\Omega).
\end{equation}
That is, $u_\alpha$ solves the eigenvalue problem \eqref{autov} in the weak sense.

By using a generalized version of the Lagrange's multiplier rule, in \cite[Theorem 4.2]{GM02} the previous claim is proved to hold without any growth assumption on $A$:

\begin{prop} \label{autov.prop}
Let $\alpha>0$ and let $u_\alpha$ be a minimizer of $E(\alpha)$. Then there exists $\lambda(\alpha)>0$ for which $u_\alpha$  solves \eqref{form.debil}.
\end{prop}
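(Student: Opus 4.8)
This is \cite[Theorem 4.2]{GM02}, and the route I would take is the \emph{generalized} Lagrange multiplier rule used there; let me indicate the strategy, why the classical differentiable version fails, and where the difficulty sits. Write $J(u)=\int_\Omega A(|\nabla u|)\,dx$ and $G(u)=\int_\Omega A(|u|)\,dx$, both convex and norm lower semicontinuous on $W^1_0L^A(\Omega)$ (lower semicontinuity of $G$ by Fatou's lemma), with $J(u_\alpha)=E(\alpha)$ and $G(u_\alpha)=\alpha$. The first step is to replace the (nonconvex) constraint $\{G=\alpha\}$ by the unilateral one $\{G\ge\alpha\}$: if $G(u)>\alpha$, then $s\mapsto G(su)$ is continuous on $[0,1]$, vanishes at $s=0$, and satisfies $G(su)\le sG(u)$ by \eqref{convexidad}, so there is $s_0\in(0,1)$ with $G(s_0u)=\alpha$; since $J(u)>0$ (otherwise $u\equiv 0$ and $G(u)=0\ne\alpha$) and $J(s_0u)\le s_0J(u)<J(u)$, no such $u$ can beat $u_\alpha$. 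Hence $u_\alpha$ minimizes the convex functional $J$ over the complement of the closed convex set $C_0:=\{G\le\alpha\}$, with $u_\alpha\in\partial C_0$.

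Next I would run a supporting hyperplane argument. Separating $C_0$ from $u_\alpha$ produces a nonzero $\xi$ in the normal cone $N_{C_0}(u_\alpha)$, and the subgradient inequality for $G$ gives $\{G<\alpha\}\subset\{\langle\xi,\cdot-u_\alpha\rangle<0\}$; therefore $u_\alpha$ in fact minimizes $J$ over the half-space $H=\{\langle\xi,\cdot-u_\alpha\rangle\ge0\}$. Applying the subdifferential sum rule to $J+\iota_H$ gives $0\in\partial J(u_\alpha)+N_H(u_\alpha)$ with $N_H(u_\alpha)=\{-\mu\xi:\mu\ge0\}$; the case $\mu=0$ would force $0\in\partial J(u_\alpha)$, i.e. $u_\alpha$ a global minimizer of $J$, contradicting $J(u_\alpha)=E(\alpha)>0=J(0)$. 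Writing $\xi$, up to a positive factor, as a subgradient of $G$ at $u_\alpha$, one concludes that there are $\lambda(\alpha)>0$ and a subgradient of $G$ at $u_\alpha$ whose $\lambda(\alpha)$-multiple belongs to $\partial J(u_\alpha)$. Testing this identity against $v\in C_c^\infty(\Omega)$ and identifying the relevant subgradients with their natural integral representatives---via the pointwise convexity inequality $A(|\zeta+\eta|)\ge A(|\zeta|)+a(|\zeta|)\tfrac{\zeta\cdot\eta}{|\zeta|}$ for $J$ and its scalar analogue for $G$---yields exactly \eqref{form.debil}.

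The main obstacle is that, with no $\Delta_2$-type condition, none of these functionals is Gâteaux differentiable and, worse, $J(u_\alpha+tv)$ can equal $+\infty$ for every $t\ne 0$ and every nonzero $v\in C_c^\infty(\Omega)$; correspondingly $a(|\nabla u_\alpha|)$ and $a(|u_\alpha|)$ need not a priori lie in $L^{\bar A}$, so even the meaning of the integrals in \eqref{form.debil} must be secured. The proof must therefore be phrased entirely through convex subdifferentials in $(W^1_0L^A(\Omega))^*$; one has to check the constraint qualification behind the sum rule, and---above all---extract from the minimality of $u_\alpha$ the local integrability that turns the abstract subgradients into the honest integral functionals appearing in \eqref{form.debil}. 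These are precisely the points handled in \cite{GM02} (see also \cite{T00}), whose result we invoke.
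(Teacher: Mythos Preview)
The paper does not prove this proposition; it simply records it as \cite[Theorem 4.2]{GM02}, exactly as you do. Your sketch of the convex-analytic route and the obstructions in the non-$\Delta_2$ regime is useful context but goes beyond what the paper itself provides, so there is nothing to compare.
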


As for the fractional problem, this was tackled in \cite{SV22} (see also \cite{FBSp24} where higher order eigenvalues were also studied) and the following result was otained.
\begin{prop}\label{mini.prop.s}
For any $\alpha>0$, there exists a minimizer $u_\alpha^s\in W^s_0L^A(\Omega)$ of the energy function $E^s(\alpha)$ defined in \eqref{minimi.nl.intro}. Moreover, there exists $\lambda^s(\alpha)$ such that $u_\alpha^s$ is a weak solution of \eqref{autov.nl.intro}, that is
$$
\iint_{\R^{2n}} a(|D^s u_\alpha^s|)\frac{D^s u_\alpha^s D^s v}{|D^s u_\alpha^s|}\, d\nu_n = \lambda^s(\alpha) \int_\Omega a(|u_\alpha^s|) \frac{u_\alpha^s v}{|u_\alpha^s|}\, dx, 
$$
for all $v\in C^\infty_c(\Omega)$.
\end{prop}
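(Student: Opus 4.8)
The statement is the fractional analogue of Propositions~\ref{mini.prop} and~\ref{autov.prop}, and the plan is to run the direct method of the calculus of variations in the fractional Orlicz--Sobolev space $W^s_0 L^A(\Omega)$, followed by a generalized (non-smooth) Lagrange multiplier argument for the Euler--Lagrange equation; this is the scheme carried out in \cite{SV22} (see also \cite{FBSp24}), which I outline here.

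\emph{Existence of a minimizer.} Fix $\alpha>0$ and take a minimizing sequence $u_k\in C^\infty_c(\Omega)$ with $\int_\Omega A(|u_k|)\,dx=\alpha$ and $\mathcal{F}(u_k):=\iint_{\R^{2n}}A(|D^su_k|)\,d\nu_n\to E^s(\alpha)$. Since both modulars stay bounded along the sequence, Lemma~\ref{cota.prelim} (applied to $u_k$ in $L^A(\Omega)$ and to $D^su_k$ in $L^A(\R^{2n},d\nu_n)$) shows that $\|u_k\|_{s,A}$ is bounded. By the result recalled before Proposition~\ref{embeddings-s} there is a Banach space $X$ with $X^*=W^s_0L^A(\Omega)$, so Banach--Alaoglu yields a subsequence converging weakly-$*$ to some $u^s_\alpha\in W^s_0L^A(\Omega)$. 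The compact embedding of Proposition~\ref{embeddings-s} then gives $u_k\to u^s_\alpha$ strongly in $L^A(\Omega)$ and, along a further subsequence, a.e.\ in $\R^n$ (all functions vanishing off $\Omega$); hence $D^su_k\to D^su^s_\alpha$ a.e.\ in $\R^{2n}$ and Fatou's lemma gives $\mathcal{F}(u^s_\alpha)\le\liminf_k\mathcal{F}(u_k)=E^s(\alpha)$. It remains to check that the constraint is preserved, i.e.\ $\int_\Omega A(|u^s_\alpha|)\,dx=\alpha$: Fatou gives ``$\le\alpha$'', and the reverse inequality is exactly the delicate point in the absence of the $\Delta_2$ condition — it is obtained in \cite{SV22} (compare \cite[Theorem 3.1]{GLMS99} for the local case) by exploiting the strong $L^A$-convergence provided by the compact embedding together with a rescaling argument. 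Granting this, $u^s_\alpha$ is admissible and $\mathcal{F}(u^s_\alpha)=E^s(\alpha)$, so it is a minimizer.

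\emph{The eigenvalue equation.} The functionals $\mathcal{F}$ and $\mathcal{G}(u):=\int_\Omega A(|u|)\,dx$ are convex and weakly-$*$ lower semicontinuous, and since $A$ is differentiable with $A'=a$ their directional derivatives exist and equal, respectively, $v\mapsto\iint_{\R^{2n}}a(|D^su^s_\alpha|)\frac{D^su^s_\alpha\,D^sv}{|D^su^s_\alpha|}\,d\nu_n$ and $v\mapsto\int_\Omega a(|u^s_\alpha|)\frac{u^s_\alpha v}{|u^s_\alpha|}\,dx$ (differentiation under the integral sign being justified by monotonicity of the difference quotients, and finiteness of the resulting integrals by the Young-type inequality, which places the coefficients in the appropriate $L^{\bar A}$ spaces). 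Because $a$ need not be continuous these functionals are not $C^1$ for the norm topology, so the classical Lagrange multiplier theorem does not apply; instead one invokes the generalized multiplier rule for non-smooth convex functionals of \cite[Theorem 4.2]{GM02} (see also \cite{T00,SV22}), which produces $\lambda^s(\alpha)\in\R$ such that $\mathcal{F}'(u^s_\alpha)[v]=\lambda^s(\alpha)\,\mathcal{G}'(u^s_\alpha)[v]$ for every $v\in C^\infty_c(\Omega)$ — precisely the weak formulation of \eqref{autov.nl.intro}. Finally $\lambda^s(\alpha)>0$: since both sides are continuous linear functionals of $v$ for the relevant topology, the identity extends by the weak-$*$ density of $C^\infty_c(\Omega)$ to the choice $v=u^s_\alpha$, giving $\iint_{\R^{2n}}a(|D^su^s_\alpha|)|D^su^s_\alpha|\,d\nu_n=\lambda^s(\alpha)\int_\Omega a(|u^s_\alpha|)|u^s_\alpha|\,dx$; the left-hand side is strictly positive (otherwise $D^su^s_\alpha\equiv 0$, forcing $u^s_\alpha$ constant and hence $0$, against $\mathcal{G}(u^s_\alpha)=\alpha>0$) and so is the integral on the right, whence $\lambda^s(\alpha)>0$.

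\emph{Main obstacle.} The whole difficulty is concentrated in working without the $\Delta_2$ condition: $W^s_0L^A(\Omega)$ is then neither reflexive nor separable, which forces the use of the weak-$*$ topology on the predual $X$ of \cite{FBSp24}; the energy functionals are merely convex rather than $C^1$, which forces a generalized multiplier rule; and, most delicately, the equality constraint $\int_\Omega A(|u|)\,dx=\alpha$ need not be weakly-$*$ closed, so its passage to the limit genuinely relies on the compact embedding of Proposition~\ref{embeddings-s}.
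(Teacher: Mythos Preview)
The paper does not supply its own proof of this proposition: it is stated as a result taken from \cite{SV22} (with \cite{FBSp24} also referenced), exactly as the preceding Propositions~\ref{mini.prop} and~\ref{autov.prop} are taken from \cite{GLMS99} and \cite{GM02}. Your outline is a faithful sketch of the argument in those references---direct method via weak-$*$ compactness and the compact embedding of Proposition~\ref{embeddings-s}, followed by the generalized Lagrange multiplier rule---and you correctly flag the one genuinely non-routine point (passing the constraint $\int_\Omega A(|u|)\,dx=\alpha$ to the limit without $\Delta_2$). So there is nothing to compare: your approach \emph{is} the approach of the cited papers, and the present paper simply quotes the result.
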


\section{Elementary estimates on the energy function}\label{sec.elementary}

In this section we will assume that the Young function $A$ verifies the $\Delta_2-$condition either at 0 or at $\infty$ and deduce some immediate bounds on the energy $E(\alpha)$ and on the eigenvalue $\lambda(\alpha)$.

To this end, let us consider $u\in C^\infty_c(\Omega)$ to be fixed such that
$$
\int_\Omega A(|u|)\, dx = 1.
$$
Then, since $A$ is continuous and nondecreasing, if we define the function
\begin{equation}\label{def.phi}
\phi(r) = \phi_A^u(r)=\int_\Omega A(r|u|)\, dx,
\end{equation}
it follows that $\phi\colon \R_+\to\R_+$ is continuous, nondecreasing, $\phi(0)=0$, $\phi(1)=1$ and $\phi(\infty)=\infty$. Hence, for any $\alpha>0$, there exists $r=r_\alpha>0$ such that $\phi(r_\alpha)=\alpha$.

Recall that if $\alpha\le 1$, then $r_\alpha\le 1$ and that if $\alpha\ge 1$ then $r_\alpha\ge 1$.

We want to estimate $r_\alpha$ in  terms of $\alpha$. To this end, we recall an easy lemma regarding Young functions
\begin{lem}\label{est.A}
Let $A$ be a Young function such that  $A\in \Delta_2^0$ (or $A\in \Delta_2^\infty$)   and let $p>1$ be defined in \eqref{def.p}.

Then it holds that $\tau^p A(t)\le A(\tau t)\le \tau A(t)$ for $0<\tau<1$ and $t<T_0$ (or $\tau A(t)\le A(\tau t)\le \tau^p A(t)$ for $\tau>1$ and $t>T_\infty$).

Hence, if $A\in \Delta_2$, then
$$
\min\{\tau, \tau^p\} A(t)\le A(\tau t)\le \max\{\tau, \tau^p\} A(t),\quad \text{for $\tau>0$}.
$$
\end{lem}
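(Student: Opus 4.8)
The plan is to derive the two nontrivial power bounds from the characterization \eqref{def.p} of the $\Delta_2$ classes by a logarithmic differentiation argument, the remaining inequalities being immediate from the convexity relations \eqref{convexidad}. I will carry out the $\Delta_2^0$ case in detail; the $\Delta_2^\infty$ case is entirely parallel, with the roles of ``$<$'' and ``$>$'' interchanged.

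First, for $0<\tau<1$ the bound $A(\tau t)\le \tau A(t)$ is exactly the first inequality in \eqref{convexidad} and needs no hypothesis on $A$, so the only work is to show $A(\tau t)\ge \tau^p A(t)$ when $0<\tau<1$ and $0<t<T_0$. Since $A(s)=\int_0^s a$ with $a$ nondecreasing, right continuous and positive on $(0,\infty)$, the function $A$ is locally Lipschitz and strictly positive on $(0,\infty)$, so $s\mapsto \ln A(s)$ is locally absolutely continuous there with $(\ln A)'(s)=a(s)/A(s)$ a.e. By \eqref{def.p}, $a(s)/A(s)\le p/s$ for all $s\le T_0$, and since the whole interval of integration $[\tau t,t]$ lies in $(0,T_0)$, integrating gives
$$
\ln A(t)-\ln A(\tau t)=\int_{\tau t}^{t}\frac{a(s)}{A(s)}\,ds\le \int_{\tau t}^{t}\frac{p}{s}\,ds = p\ln\frac{1}{\tau}.
$$
Exponentiating yields $A(t)\le \tau^{-p}A(\tau t)$, i.e. $A(\tau t)\ge \tau^p A(t)$. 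For $\Delta_2^\infty$ one argues the same way: for $\tau>1$ and $t>T_\infty$ the interval $[t,\tau t]$ lies in $(T_\infty,\infty)$, so $\int_t^{\tau t}a(s)/A(s)\,ds\le p\ln\tau$ gives $A(\tau t)\le \tau^p A(t)$, while $A(\tau t)\ge \tau A(t)$ is the second inequality in \eqref{convexidad}.

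Finally, if $A\in\Delta_2$ then \eqref{def.p} holds for every $s>0$, so both power estimates are valid for all $t>0$, and handling $t=0$ separately (both sides vanish) extends them to all $t\ge 0$. For $0<\tau<1$ one has $\tau^p<\tau$ since $p>1$, so the pair of estimates reads precisely $\min\{\tau,\tau^p\}A(t)\le A(\tau t)\le \max\{\tau,\tau^p\}A(t)$; for $\tau>1$ one has $\tau<\tau^p$ and the bounds $\tau A(t)\le A(\tau t)\le \tau^p A(t)$ are again this same inequality; the case $\tau=1$ is trivial. I do not expect a real obstacle here; the only steps needing a little care are justifying the fundamental theorem of calculus for $\ln A$ (which uses that $A$ is bounded away from $0$ on compact subsets of $(0,\infty)$) and checking that the interval of integration stays inside the region where \eqref{def.p} applies, which is automatic since $\tau t$ lies between $0$ and $t$ when $\tau<1$ and beyond $t$ when $\tau>1$.
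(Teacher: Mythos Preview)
Your argument is correct: the logarithmic-derivative integration of the inequality $a(s)/A(s)\le p/s$ is precisely the standard route to these power estimates, and you have been careful about the interval of integration and the absolute continuity of $\ln A$. The paper itself omits the proof entirely, stating only that it ``is elementary and is omitted,'' so there is nothing to compare against; your write-up would serve perfectly well as the missing proof.
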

The proof of this lemma is elementary and is omitted.

With the help of Lemma \ref{est.A} we can obtain a simple estimate on $r_\alpha$, namely
\begin{lem}\label{est.r}
Let $A\in \Delta_2$ and $u\in C^\infty_c(\Omega)$. Let $\phi=\phi_A^u$ be defined in \eqref{def.phi}, $\alpha>0$ and let $r_\alpha>0$ be such $\phi(r_\alpha)=\alpha$. Then
$$
\min\{\alpha, \alpha^\frac{1}{p}\}\le r_\alpha\le \max\{\alpha, \alpha^{\frac{1}{p}}\},
$$
where $p$ is the constant defined in \eqref{def.p}.
\end{lem}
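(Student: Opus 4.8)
The plan is to use the two-sided pointwise bound from Lemma \ref{est.A}. Since here $u$ is the fixed function with $\int_\Omega A(|u|)\,dx = 1$ and $A\in\Delta_2$, I would apply that lemma with $\tau = r_\alpha$ and $t = |u(x)|$ at each point $x\in\Omega$ and then integrate over $\Omega$; recalling $\phi(1)=\int_\Omega A(|u|)\,dx=1$, this gives at once
\begin{equation*}
\min\{r_\alpha, r_\alpha^p\} \le \int_\Omega A(r_\alpha |u|)\,dx = \phi(r_\alpha) = \alpha \le \max\{r_\alpha, r_\alpha^p\}.
\end{equation*}
It is worth stressing that one uses here the assumption $A\in\Delta_2$ (not merely $A\in\Delta_2^0$ or $A\in\Delta_2^\infty$), so that the estimate of Lemma \ref{est.A} is valid for \emph{every} value of $|u(x)|$, and not only on a half-line $t<T_0$ or $t>T_\infty$.

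Next I would split according to whether $\alpha\ge 1$ or $\alpha\le 1$. Since $\phi$ is nondecreasing with $\phi(1)=1$ (as already noted just before the statement), the case $\alpha\ge 1$ corresponds to $r_\alpha\ge 1$, so that $r_\alpha\le r_\alpha^p$, and the displayed chain becomes $r_\alpha\le\alpha\le r_\alpha^p$, i.e.\ $\alpha^{1/p}\le r_\alpha\le\alpha$; since $\alpha^{1/p}\le\alpha$ when $\alpha\ge1$, this is precisely $\min\{\alpha,\alpha^{1/p}\}\le r_\alpha\le\max\{\alpha,\alpha^{1/p}\}$. Symmetrically, $\alpha\le 1$ gives $r_\alpha\le 1$, hence $r_\alpha^p\le r_\alpha$, and the chain becomes $r_\alpha^p\le\alpha\le r_\alpha$, i.e.\ $\alpha\le r_\alpha\le\alpha^{1/p}$, which again matches the claim because $\alpha\le\alpha^{1/p}$ for $\alpha\le1$. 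The only computation involved is solving the power inequalities relating $r_\alpha^p$ and $\alpha$ for $r_\alpha$.

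There is no genuine obstacle here: the substance of the lemma is entirely contained in Lemma \ref{est.A}, and what remains is bookkeeping to align the two regimes $\alpha\ge1$ and $\alpha\le1$ with the correct branch of $\min$ and $\max$. If one preferred to avoid the case split altogether, one could instead observe that $r\mapsto\min\{r,r^p\}$ and $r\mapsto\max\{r,r^p\}$ are continuous strictly increasing bijections of $(0,\infty)$ onto itself and invert the chain $\min\{r_\alpha,r_\alpha^p\}\le\alpha\le\max\{r_\alpha,r_\alpha^p\}$ directly; but the two-case formulation is cleaner to present.
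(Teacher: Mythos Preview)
Your proof is correct and follows exactly the approach the paper intends: the paper's own proof simply reads ``It is immediate from the definition of $r_\alpha$ and Lemma \ref{est.A},'' and your argument is the natural unpacking of that sentence, integrating the pointwise bound and then inverting via the case split $\alpha\ge 1$ versus $\alpha\le 1$.
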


\begin{proof}
It is immediate from the definition of $r_\alpha$ and Lemma \ref{est.A}.
\end{proof}

Next, we combine Lemmas \ref{est.A} and \ref{est.r} to obtain a bound for the energy $E(\alpha)$.
\begin{lem}\label{est.E}
Given $\alpha>0$, the energy of level $\alpha$ is defined by \eqref{minimi}. Then, if $A\in \Delta_2$, we have that
$$
\min\{\alpha^p, \alpha^\frac{1}{p}\}E(1)\le E(\alpha)\le \max\{\alpha^p, \alpha^\frac{1}{p}\}E(1)
$$
\end{lem}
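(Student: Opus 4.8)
The plan is to use the elementary dilation structure of \eqref{minimi} together with Lemmas \ref{est.A} and \ref{est.r}. The key point is that every $u\in C^\infty_c(\Omega)$ can be rescaled so as to meet any prescribed value of the normalization $\int_\Omega A(|u|)\,dx$, that Lemma \ref{est.A} measures precisely how the numerator $\int_\Omega A(|\nabla u|)\,dx$ is distorted under such a dilation, and that Lemma \ref{est.r} bounds the dilation factor in terms of $\alpha$. Passing to the infimum at the end delivers the two inequalities, and the whole argument stays within $C^\infty_c(\Omega)$, so no density considerations are needed.

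For the upper bound, fix $u\in C^\infty_c(\Omega)$ with $\int_\Omega A(|u|)\,dx=1$, let $\phi=\phi_A^u$ be as in \eqref{def.phi} and let $r_\alpha>0$ satisfy $\phi(r_\alpha)=\alpha$, so that $r_\alpha u$ is admissible in \eqref{minimi}. Then, by Lemma \ref{est.A},
\[
E(\alpha)\le \int_\Omega A(r_\alpha|\nabla u|)\,dx\le \max\{r_\alpha,r_\alpha^p\}\int_\Omega A(|\nabla u|)\,dx .
\]
Since $r_\alpha\le 1$ precisely when $\alpha\le 1$, combining this with the bound $r_\alpha\le\max\{\alpha,\alpha^{1/p}\}$ of Lemma \ref{est.r} gives $\max\{r_\alpha,r_\alpha^p\}\le\max\{\alpha^p,\alpha^{1/p}\}$ (for $\alpha\ge1$ the left-hand side is $r_\alpha^p\le\alpha^p$, and for $\alpha\le1$ it is $r_\alpha\le\alpha^{1/p}$). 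Taking the infimum over all such $u$ and recalling the definition of $E(1)$ yields $E(\alpha)\le\max\{\alpha^p,\alpha^{1/p}\}E(1)$.

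For the lower bound one runs the same argument in reverse: given any $v\in C^\infty_c(\Omega)$ with $\int_\Omega A(|v|)\,dx=\alpha$, the continuity, monotonicity and limiting behaviour of $r\mapsto\int_\Omega A(r|v|)\,dx$ (exactly as recorded for $\phi$ before Lemma \ref{est.r}) provide some $t>0$ with $\int_\Omega A(t|v|)\,dx=1$. Writing $u=tv\in C^\infty_c(\Omega)$, this $u$ is admissible in \eqref{minimi} for level $1$, and $1/t$ is exactly the parameter $r_\alpha$ attached to $u$; hence Lemma \ref{est.r} gives $\min\{\alpha,\alpha^{1/p}\}\le 1/t\le\max\{\alpha,\alpha^{1/p}\}$. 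Using Lemma \ref{est.A} once more,
\[
\int_\Omega A(|\nabla v|)\,dx=\int_\Omega A\left(\tfrac{1}{t}|\nabla u|\right)dx\ge \min\{\tfrac{1}{t},\tfrac{1}{t^p}\}\int_\Omega A(|\nabla u|)\,dx\ge \min\{\tfrac{1}{t},\tfrac{1}{t^p}\}\,E(1),
\]
and the same case distinction (now $1/t\le1$ iff $\alpha\le1$) turns the lower bound on $1/t$ into $\min\{1/t,1/t^p\}\ge\min\{\alpha^p,\alpha^{1/p}\}$. Taking the infimum over all admissible $v$ gives $E(\alpha)\ge\min\{\alpha^p,\alpha^{1/p}\}E(1)$.

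The only genuinely delicate step is this last bit of bookkeeping: Lemma \ref{est.r} is phrased through $\min\{\alpha,\alpha^{1/p}\}$ and $\max\{\alpha,\alpha^{1/p}\}$, whereas the statement involves $\min\{\alpha^p,\alpha^{1/p}\}$ and $\max\{\alpha^p,\alpha^{1/p}\}$, so reconciling them forces one to separate $\alpha\le1$ from $\alpha\ge1$ and to use that the dilation factor always lies on the same side of $1$ as $\alpha$, which is exactly what guarantees that the correct branch is picked in $\max\{r,r^p\}$ and $\min\{r,r^p\}$. Everything else is a direct application of Lemmas \ref{est.A} and \ref{est.r}.
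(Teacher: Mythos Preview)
Your proof is correct and is precisely the argument the paper has in mind when it writes ``Immediate from the previous lemmas'': you rescale an admissible competitor by $r_\alpha$, apply Lemma~\ref{est.A} to control the Dirichlet energy, and use Lemma~\ref{est.r} together with the observation that $r_\alpha$ and $\alpha$ lie on the same side of $1$ to pass from $\max\{r_\alpha,r_\alpha^p\}$ to $\max\{\alpha^p,\alpha^{1/p}\}$ (and analogously for the lower bound). The only thing you add beyond the paper is the explicit bookkeeping for the case distinction $\alpha\lessgtr 1$, which is exactly the detail the paper suppresses.
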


\begin{proof}
Immediate from the previous lemmas.
\end{proof}

Next, we want to compare the energy $E(\alpha)$ to the eigenvalue $\lambda(\alpha)$ of problem \eqref{autov}.
\begin{lem}\label{est.autov}
Let $A$ be a Young function such that $A\in \Delta_2$ and let $p$ be defined in \eqref{def.p}.

Let $\alpha>0$ be a normalization parameter and let $u\in W^1_0L^A(\Omega)$ be the minimizer associated to the energy $E(\alpha)$. Let $\lambda(\alpha)$ be the eigenvalue of \eqref{autov} associated to $u$. Then, we have
$$
\frac{1}{p}\frac{E(\alpha)}{\alpha}\le \lambda(\alpha)\le p\frac{E(\alpha)}{\alpha}.
$$
\end{lem}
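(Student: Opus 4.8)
The plan is to exploit the weak formulation \eqref{form.debil} by testing it against the minimizer $u$ itself, which immediately relates $\lambda(\alpha)$ to the two quantities $\int_\Omega a(|\nabla u|)|\nabla u|\,dx$ and $\int_\Omega a(|u|)|u|\,dx$, and then to use the $\Delta_2$ inequality $A(t)\le ta(t)\le pA(t)$ (which is exactly \eqref{def.p}, valid for all $t>0$ since $A\in\Delta_2$) to sandwich these two integrals between multiples of $\int_\Omega A(|\nabla u|)\,dx=E(\alpha)$ and $\int_\Omega A(|u|)\,dx=\alpha$. First I would note that $u\in W^1_0 L^A(\Omega)$ is an admissible test function: although \eqref{form.debil} is stated for $v\in C_c^\infty(\Omega)$, since $A,\bar A\in\Delta_2$ the space $W^{1,A}_0(\Omega)$ is separable and reflexive and $C_c^\infty(\Omega)$ is dense, so by the usual approximation argument (using that $a(|\nabla u|)\nabla u/|\nabla u|\in L^{\bar A}(\Omega)$ and $a(|u|)u/|u|\in L^{\bar A}(\Omega)$, which follows from the Young inequality and $A\in\Delta_2$) one may take $v=u$.

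With $v=u$ the identity \eqref{form.debil} becomes
$$
\int_\Omega a(|\nabla u|)\,|\nabla u|\,dx = \lambda(\alpha)\int_\Omega a(|u|)\,|u|\,dx.
$$
Now apply the pointwise bound $A(t)\le t\,a(t)\le p\,A(t)$, which holds for every $t\ge 0$ when $A\in\Delta_2$ by \eqref{def.p} (integrating $a$, and using monotonicity of $a$ for the lower bound $A(t)=\int_0^t a(\tau)\,d\tau\le t a(t)$). This gives
$$
E(\alpha)=\int_\Omega A(|\nabla u|)\,dx \;\le\; \int_\Omega a(|\nabla u|)\,|\nabla u|\,dx \;\le\; p\int_\Omega A(|\nabla u|)\,dx = p\,E(\alpha),
$$
and similarly $\alpha\le \int_\Omega a(|u|)\,|u|\,dx\le p\,\alpha$. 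Substituting the upper bound for the left-hand integral and the lower bound for the right-hand integral in the identity yields $E(\alpha)\le \lambda(\alpha)\cdot p\,\alpha$, i.e. $\lambda(\alpha)\ge \frac{1}{p}\frac{E(\alpha)}{\alpha}$; using the reverse bounds gives $p\,E(\alpha)\ge \lambda(\alpha)\cdot \alpha$, i.e. $\lambda(\alpha)\le p\frac{E(\alpha)}{\alpha}$. This is exactly the claimed two-sided estimate.

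The only genuine subtlety — and the step I would write most carefully — is the justification that the minimizer $u$ itself may be used as a test function in \eqref{form.debil}, since a priori \eqref{form.debil} is only asserted for smooth compactly supported $v$; once the $\Delta_2$ and $\Delta_2$ for $\bar A$ hypotheses are in force this is a routine density argument, but it is where the hypothesis $A\in\Delta_2$ (rather than merely $A\in\Delta_2^0$ or $A\in\Delta_2^\infty$) is actually needed, so it should be stated explicitly. Everything else is the elementary pointwise inequality $A(t)\le ta(t)\le pA(t)$ applied under the integral sign.
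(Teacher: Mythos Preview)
Your proposal is correct and follows essentially the same route as the paper: test the weak formulation with $v=u$ to obtain $\lambda(\alpha)=\dfrac{\int_\Omega a(|\nabla u|)|\nabla u|\,dx}{\int_\Omega a(|u|)|u|\,dx}$, then sandwich each integral using the pointwise inequality $A(t)\le ta(t)\le pA(t)$. One small remark: you invoke $\bar A\in\Delta_2$ for the density step, but the lemma only assumes $A\in\Delta_2$; in fact $A\in\Delta_2$ alone suffices, since it already gives norm-density of $C_c^\infty(\Omega)$ in $W^{1,A}_0(\Omega)$, and the bound $\bar A(a(t))\le ta(t)\le pA(t)$ (valid for any Young pair) places $a(|\nabla u|)$ and $a(|u|)$ in $L^{\bar A}(\Omega)$ without any hypothesis on $\bar A$.
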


\begin{proof}
First, observe that from \cite[Theorem 4.23]{GM02}, there exists a minimizer $u\in W^1_0L^A(\Omega)$ for $E(\alpha)$ and $u$ is a weak solution to \eqref{autov}.

Now, if we use $u$ as a test function in the weak formulation of \eqref{autov}, we get that
\begin{equation}\label{esti1}
\lambda(\alpha) = \frac{\int_\Omega a(|\nabla u|)|\nabla u|\, dx}{\int_\Omega a(|u|)|u|\, dx}.
\end{equation}

Next, just observe that we have the inequality
\begin{equation}\label{esti2}
A(t)\le a(t) t\le p A(t),
\end{equation}
where the first inequality follows from the monotonicity of $a(t)$ and the second one is the $\Delta_2-$condition.

The result now follows just combining \eqref{esti1} and \eqref{esti2}.
\end{proof}

\begin{rem}
Observe that combining Lemmas \ref{est.E} and \ref{est.autov} we immediately get that there exist $c, C>0$ such that
$$
c\min\{\alpha^{p-1}, \alpha^{\frac{1}{p}-1}\} \le \frac{E(\alpha)}{\alpha}, \lambda(\alpha)\le C\max\{\alpha^{p-1}, \alpha^{\frac{1}{p}-1}\}. 
$$
Also observe that these bounds trivialize when $\alpha\to 0$ and when $\alpha\to\infty$.
\end{rem}

\section{Continuity and differentiability of the energy function}\label{sec.regularity}

In this section we study the regularity properties of the energy function $E(\alpha)$ defined in \eqref{minimi}. First, without assuming the $\Delta_2$ condition on $A$ nor $\bar A$, we prove the continuity of $E(\alpha)$

\begin{thm}\label{teo.cont}
The energy function $E(\alpha)$ is continuous for every $\alpha>0$.
\end{thm}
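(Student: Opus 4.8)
I would prove continuity at an arbitrary $\alpha>0$ by establishing upper and lower semicontinuity separately, i.e. $\limsup_{\beta\to\alpha}E(\beta)\le E(\alpha)$ and $E(\alpha)\le\liminf_{\beta\to\alpha}E(\beta)$. The first is a soft rescaling argument; the second is the direct method, whose only delicate point is that the constraint $\int_\Omega A(|u|)\,dx=\alpha$ is preserved in the limit.

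\textbf{Upper semicontinuity.} Fix $\varepsilon>0$ and pick $u\in C_c^\infty(\Omega)$ with $\int_\Omega A(|u|)\,dx=\alpha$ and $\int_\Omega A(|\nabla u|)\,dx\le E(\alpha)+\varepsilon$. The map $\phi(r):=\int_\Omega A(r|u|)\,dx$ is continuous and strictly increasing on $[0,\infty)$ (strictness because $u\not\equiv0$ and $a>0$ on $(0,\infty)$), with $\phi(0)=0$, $\phi(1)=\alpha$, $\phi(\infty)=\infty$; hence it is a homeomorphism of $[0,\infty)$ and $r_\beta:=\phi^{-1}(\beta)\to1$ as $\beta\to\alpha$. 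Then $v_\beta:=r_\beta u\in C_c^\infty(\Omega)$ is admissible for $E(\beta)$, so $E(\beta)\le\int_\Omega A(r_\beta|\nabla u|)\,dx$. Since $|\nabla u|$ is bounded and supported in a compact set and $r_\beta\le2$ for $\beta$ near $\alpha$, the integrands are dominated on $\supp u$ by the constant $A(2\|\nabla u\|_\infty)$ and converge pointwise to $A(|\nabla u|)$; dominated convergence yields $\limsup_{\beta\to\alpha}E(\beta)\le\int_\Omega A(|\nabla u|)\,dx\le E(\alpha)+\varepsilon$, and $\varepsilon\downarrow0$ finishes this part.

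\textbf{Lower semicontinuity.} Take $\beta_k\to\alpha$ realizing the liminf, and for fixed $\varepsilon>0$ choose near-minimizers $w_k\in C_c^\infty(\Omega)$ with $\int_\Omega A(|w_k|)\,dx=\beta_k$ and $\int_\Omega A(|\nabla w_k|)\,dx\le E(\beta_k)+\varepsilon$. By Lemma \ref{cota.prelim}, $\{w_k\}$ is bounded in $W_0^1L^A(\Omega)$; using the compact embedding of Proposition \ref{embeddings} together with weak-$*$ compactness (recall $W_0^1L^A(\Omega)$ is a dual space), along a subsequence $w_k\to u$ strongly in $L^A(\Omega)$ and a.e., and $\nabla w_k\rightharpoonup^*\nabla u$ in $L^A(\Omega)$, with $u\in W_0^1L^A(\Omega)$; since each $w_k$ is bounded and $E^A(\Omega)$ is norm-closed in $L^A(\Omega)$, also $u\in E^A(\Omega)$. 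Weak-$*$ lower semicontinuity of the convex modular $v\mapsto\int_\Omega A(|\nabla v|)\,dx$ (the same tool as in the existence result, Proposition \ref{mini.prop}) gives $\int_\Omega A(|\nabla u|)\,dx\le\liminf_k(E(\beta_k)+\varepsilon)$. Granting $\int_\Omega A(|u|)\,dx=\alpha$ (see below), $u$ is admissible for $E(\alpha)$ (the infimum in \eqref{minimi} equals the one over $W_0^1L^A(\Omega)$ under the same constraint, cf. Proposition \ref{mini.prop}), so $E(\alpha)\le\int_\Omega A(|\nabla u|)\,dx\le\liminf_k E(\beta_k)+\varepsilon$, and $\varepsilon\downarrow0$ concludes.

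\textbf{The main obstacle: persistence of the constraint.} Without a $\Delta_2$ assumption the modular is not norm-continuous on $L^A(\Omega)$, so $\int_\Omega A(|u|)\,dx=\alpha$ requires a genuine argument. Set $v_k:=w_k-u$; since $\|v_k\|_A\to0$, for large $k$ one has $\|v_k\|_A\le1$ and then, by convexity of $A$ and the definition of the Luxemburg norm, $\int_\Omega A(|v_k|)\,dx\le\|v_k\|_A\to0$. Fatou's lemma and a.e.\ convergence give $\int_\Omega A(|u|)\,dx\le\liminf_k\int_\Omega A(|w_k|)\,dx=\alpha$. For the reverse inequality fix $\lambda>1$ and write $|w_k|\le|u|+|v_k|=\tfrac1\lambda(\lambda|u|)+\big(1-\tfrac1\lambda\big)\big(\tfrac{\lambda}{\lambda-1}|v_k|\big)$; convexity yields $\int_\Omega A(|w_k|)\,dx\le\tfrac1\lambda\int_\Omega A(\lambda|u|)\,dx+\big(1-\tfrac1\lambda\big)\int_\Omega A\big(\tfrac{\lambda}{\lambda-1}|v_k|\big)\,dx$, and the last term tends to $0$ as above once $\big\|\tfrac{\lambda}{\lambda-1}v_k\big\|_A\le1$. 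Hence $\alpha\le\tfrac1\lambda\int_\Omega A(\lambda|u|)\,dx$; since $u\in E^A(\Omega)$ we have $\int_\Omega A(\lambda|u|)\,dx<\infty$ for all $\lambda$, and dominated convergence gives $\int_\Omega A(\lambda|u|)\,dx\to\int_\Omega A(|u|)\,dx$ as $\lambda\downarrow1$, so letting $\lambda\downarrow1$ yields $\alpha\le\int_\Omega A(|u|)\,dx$, whence equality. This is precisely where it pays to have taken the near-minimizers in $C_c^\infty(\Omega)$ — so that the limit lies in $E^A(\Omega)$ — since the genuine minimizers of Proposition \ref{mini.prop} need not.
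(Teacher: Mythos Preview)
Your proof is correct and follows essentially the same route as the paper's: the split into upper and lower semicontinuity (Lemmas~\ref{lema.usc} and~\ref{lema.lsc}), the rescaling $r_\beta u$ for the upper bound, and the direct method (boundedness via Lemma~\ref{cota.prelim}, weak* compactness, Proposition~\ref{embeddings}, lower semicontinuity of the convex modular) for the lower bound. Your treatment is in fact more thorough on one point the paper glosses over: the paper simply asserts that strong $L^A$-convergence yields $\int_\Omega A(|u|)\,dx=\alpha_0$, whereas you supply a genuine argument---exploiting that the near-minimizers lie in $C_c^\infty(\Omega)\subset E^A(\Omega)$ so that the limit $u\in E^A(\Omega)$, and then passing $\lambda\downarrow1$ by dominated convergence---which is exactly what is needed in the absence of the $\Delta_2$ condition.
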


We split the proof into two lemmas.

\begin{lem} \label{lema.usc}
Let $A$ be a Young function and let $E(\alpha)$ be the energy function defined in \eqref{minimi}. Then, $E$ is upper semicontinuous.
\end{lem}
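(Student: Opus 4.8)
The plan is to prove the upper $\limsup$ inequality $\limsup_{\alpha\to\alpha_0} E(\alpha)\le E(\alpha_0)$ at each $\alpha_0>0$ by a simple scaling argument on near-minimizers, exploiting that the admissible class in \eqref{minimi} consists of smooth compactly supported functions.

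Fix $\alpha_0>0$ and $\varepsilon>0$. Since the admissible set for $E(\alpha_0)$ is nonempty and its members have finite gradient energy, $E(\alpha_0)<\infty$, and we may pick $u\in C_c^\infty(\Omega)$ with $\int_\Omega A(|u|)\,dx=\alpha_0$ and $\int_\Omega A(|\nabla u|)\,dx\le E(\alpha_0)+\varepsilon$. Consider $\phi(r)=\int_\Omega A(r|u|)\,dx$ for $r\ge0$ (the function \eqref{def.phi} up to the normalization of $u$). Because $A$ is continuous and $u$ is bounded with compact support, $\phi$ is continuous; clearly $\phi(0)=0$, and $\phi(r)\to\infty$ as $r\to\infty$ by bounding $\phi(r)$ below by $A(r\delta)$ times the measure of a suitable superlevel set $\{|u|\ge\delta\}$. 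The point to check carefully is that $\phi$ is \emph{strictly} increasing: since $a(t)>0$ for $t>0$, the Young function $A$ is strictly increasing on $[0,\infty)$, so for $0\le r_1<r_2$ the integrand $A(r_2|u|)-A(r_1|u|)$ is nonnegative everywhere and strictly positive on the nonempty open set $\{u\ne0\}$, whence $\phi(r_2)>\phi(r_1)$. Thus $\phi$ is a homeomorphism of $[0,\infty)$ onto itself with $\phi(1)=\alpha_0$.

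Now, for $\alpha>0$ let $r_\alpha=\phi^{-1}(\alpha)$, so that $r_\alpha u\in C_c^\infty(\Omega)$ is admissible for $E(\alpha)$ and hence $E(\alpha)\le\int_\Omega A(r_\alpha|\nabla u|)\,dx$. As $\alpha\to\alpha_0$ we have $r_\alpha\to\phi^{-1}(\alpha_0)=1$ by continuity of $\phi^{-1}$; since $\nabla u$ is bounded and supported in a fixed compact set, and $A$ is uniformly continuous on bounded intervals, $\int_\Omega A(r_\alpha|\nabla u|)\,dx\to\int_\Omega A(|\nabla u|)\,dx$. Therefore $\limsup_{\alpha\to\alpha_0}E(\alpha)\le\int_\Omega A(|\nabla u|)\,dx\le E(\alpha_0)+\varepsilon$, and letting $\varepsilon\downarrow0$ proves upper semicontinuity at $\alpha_0$.

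The only step requiring care is the strict monotonicity of $\phi$: this is what guarantees $r_\alpha\to1$ (and not merely to some other point of a level set of $\phi$), and it is precisely where the defining property $a(t)>0$ for $t>0$ of a Young function enters. Everything else is routine dominated/uniform-convergence bookkeeping, facilitated by the smoothness and compact support of the test function; in particular no $\Delta_2$ hypothesis on $A$ or $\bar A$ is used, in accordance with the statement.
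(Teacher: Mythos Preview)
Your proof is correct and follows essentially the same approach as the paper: fix a near-minimizer $u\in C_c^\infty(\Omega)$ for $E(\alpha_0)$, use the scaling function $\phi(r)=\int_\Omega A(r|u|)\,dx$ to produce admissible competitors $r_\alpha u$ for $E(\alpha)$, and pass to the limit using $r_\alpha\to 1$. The only difference is that you spell out in more detail the strict monotonicity and surjectivity of $\phi$, which the paper takes from the discussion surrounding \eqref{def.phi}.
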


\begin{proof}
Let $\alpha_0>0$ be fixed. Given $\ve>0$, take $u_0=u_{\ve,\alpha_0}\in C^\infty_c(\Omega)$ be such that
$$
\int_\Omega A(|\nabla u_0|)\, dx \le E(\alpha_0) + \ve\quad\text{and}\quad \int_\Omega A(|u_0|)\, dx = \alpha_0.
$$
Let $\phi(r)=\phi_A^{u_0}(r)$ be the function defined in \eqref{def.phi} and given $\alpha>0$, let $r_\alpha>0$ be such that $\phi(r_\alpha)=\alpha$.

Observe that  $\lim_{\alpha\to \alpha_0} \phi(r_\alpha) = \lim_{\alpha\to \alpha_0} \alpha = \alpha_0 = \phi(1)$. But, since $\phi$ is strictly increasing and continuous, it follows that $\phi^{-1}$ is also continuous and strictly increasing, from where it follows that $r_\alpha\to 1$ when $\alpha\to \alpha_0$. 

Hence $r_\alpha u_0$ is an admissible test for $E(\alpha)$ and we get that
$$
\limsup_{\alpha\to \alpha_0} E(\alpha) \leq \limsup_{\alpha\to \alpha_0} \int_\Omega A (r_\alpha |\nabla u_0|)\, dx = \int_\Omega A(|\nabla u_0|)\,dx \le E(\alpha_0) + \ve.
$$
Since $\ve>0$ is arbitrary, the result follows.
\end{proof}

\begin{lem} \label{lema.lsc}
Let $A$ be a Young function and let $E(\alpha)$ be the energy function defined in \eqref{minimi}. Then, $E$ is lower semicontinuous.
\end{lem}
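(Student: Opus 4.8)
The plan is to run the direct method of the calculus of variations, reproducing the scheme already used for the existence statement in Proposition \ref{mini.prop}. Fix $\alpha_0>0$ and a sequence $\alpha_k\to\alpha_0$; after passing to a subsequence I may assume that $E(\alpha_k)\to\liminf_{\alpha\to\alpha_0}E(\alpha)=:L$, and if $L=+\infty$ there is nothing to prove, so assume $L<\infty$. I would then take minimizers $u_k\in W^1_0 L^A(\Omega)$ provided by Proposition \ref{mini.prop}, so that $\int_\Omega A(|\nabla u_k|)\,dx=E(\alpha_k)$ and $\int_\Omega A(|u_k|)\,dx=\alpha_k$. Since $\{E(\alpha_k)\}$ and $\{\alpha_k\}$ are bounded, Lemma \ref{cota.prelim} (applied to $u_k$ and to $|\nabla u_k|$) shows that $\{u_k\}$ is bounded in $W^1_0 L^A(\Omega)$. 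Using that $W^1_0 L^A(\Omega)$ is a dual space and that the embedding $W^1_0 L^A(\Omega)\subset L^A(\Omega)$ is compact (Proposition \ref{embeddings}), I would extract a further subsequence with $u_k\rightharpoonup^* u_0$ in $W^1_0 L^A(\Omega)$, $u_k\to u_0$ in $L^A(\Omega)$, and $u_k\to u_0$ a.e.\ in $\Omega$.

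The argument is then completed by two properties of $u_0$. The first is lower semicontinuity of the gradient modular along the weak-$*$ convergence, namely $\int_\Omega A(|\nabla u_0|)\,dx\le\liminf_k\int_\Omega A(|\nabla u_k|)\,dx=L$: this is the semicontinuity that underlies Proposition \ref{mini.prop}, and it holds because $v\mapsto\int_\Omega A(|\nabla v|)\,dx$ is convex and, through the Young inequality $\xi\cdot\eta\le A(|\xi|)+\bar A(|\eta|)$, can be written as a supremum of affine functionals that are continuous for the relevant weak-$*$ topology. The second is that there is no loss of mass, i.e.\ $\int_\Omega A(|u_0|)\,dx=\alpha_0$. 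Granting both, $u_0$ is admissible for the level $\alpha_0$; since (again from the proof of Proposition \ref{mini.prop}) $E(\alpha_0)$ coincides with the minimum of $\int_\Omega A(|\nabla v|)\,dx$ over all $v\in W^1_0 L^A(\Omega)$ with $\int_\Omega A(|v|)\,dx=\alpha_0$, I conclude $E(\alpha_0)\le\int_\Omega A(|\nabla u_0|)\,dx\le L$, which is precisely the desired lower semicontinuity.

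The step I expect to be the main obstacle is the no-loss-of-mass claim, this being the one place at which dropping the $\Delta_2$ condition really matters. Fatou's lemma yields the easy half, $\int_\Omega A(|u_0|)\,dx\le\liminf_k\alpha_k=\alpha_0$. For the reverse, the clean idea is: for fixed $\theta\in(0,1)$ one has $\|\theta^{-1}(u_k-u_0)\|_A\to0$, and since $\|w\|_A\le1$ implies $\int_\Omega A(|w|)\,dx\le\|w\|_A$ (immediate from \eqref{convexidad} and the definition of the Luxemburg norm), it follows that $\int_\Omega A(\theta^{-1}|u_k-u_0|)\,dx\to0$; writing $|u_k|\le(1-\theta)\tfrac{|u_0|}{1-\theta}+\theta\tfrac{|u_k-u_0|}{\theta}$ and using convexity of $A$ gives
\[
\alpha_k=\int_\Omega A(|u_k|)\,dx\le(1-\theta)\int_\Omega A\Big(\tfrac{|u_0|}{1-\theta}\Big)\,dx+\theta\int_\Omega A\Big(\tfrac{|u_k-u_0|}{\theta}\Big)\,dx,
\]
so that letting $k\to\infty$ and then $\theta\downarrow0$ produces $\alpha_0\le\int_\Omega A(|u_0|)\,dx$ as soon as the first integral on the right is finite for some $\theta$ — in particular when $\|u_0\|_A\le1$. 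In general this finiteness is supplied by the gradient bound $\int_\Omega A(|\nabla u_k|)\,dx\le C$: as in the proof of Proposition \ref{mini.prop}, the control on the gradient energy precludes concentration (by Orlicz--Sobolev embedding theory, $\{u_k\}$ is bounded in a strictly larger Orlicz space, making $\{A(|u_k|)\}_k$ uniformly integrable), so that the a.e.\ convergence upgrades by Vitali's theorem to $\int_\Omega A(|u_k|)\,dx\to\int_\Omega A(|u_0|)\,dx=\alpha_0$. Besides this, the only other point that needs a careful, though standard, justification is the weak-$*$ lower semicontinuity of the gradient modular invoked above; the remaining steps are routine.
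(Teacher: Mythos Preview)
Your argument is correct and follows essentially the same direct-method scheme as the paper: pick (almost) minimizers at level $\alpha_k$, get a bound in $W^1_0L^A(\Omega)$ via Lemma~\ref{cota.prelim}, pass to a weak-$*$ limit using Proposition~\ref{embeddings}, and combine lower semicontinuity of the gradient modular with convergence of $\int_\Omega A(|u_k|)\,dx$ to conclude. The differences are cosmetic: the paper takes $\varepsilon$-almost minimizers in $C_c^\infty(\Omega)$ rather than exact minimizers, and it obtains boundedness of $E(\alpha_k)$ from the already-proved upper semicontinuity (Lemma~\ref{lema.usc}) instead of passing to a subsequence realizing the $\liminf$.

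The one place where your write-up is substantially more detailed is the ``no loss of mass'' step. The paper dispatches this in a single line, taking for granted that the strong compactness of Proposition~\ref{embeddings} (from \cite{GLMS99}) delivers $\int_\Omega A(|u_k|)\,dx\to\int_\Omega A(|u|)\,dx$; your convexity-splitting plus uniform-integrability argument is a correct way to unpack this when one does not want to rely on that stronger compactness statement. Either route works.
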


\begin{proof}
Let $\alpha_0>0$ be fixed and consider $\alpha_k\to\alpha_0$ as $k\to\infty$. Let $\ve>0$ and for each $k\in\N$ let $u_k\in C^\infty_c(\Omega)$ be such that
$$
\int_\Omega A(|\nabla u_k|)\,dx \le E(\alpha_k) + \ve \quad \text{and} \quad \int_\Omega  A(|u_k|)\,dx = \alpha_k.
$$
Observe that by Lemma \ref{lema.usc}, $E(\alpha_k)$ is bounded, and hence $\int_\Omega A(|\nabla u_k|)\, dx$ is bounded. Therefore, by Lemma \ref{cota.prelim}, $\{u_k\}_{k\in\N}$ is bounded in $W^1_0L^A(\Omega)$.

This allows to apply Proposition \ref{embeddings} to derive the existence of a function  $u\in W^1_0L^A(\Omega)$ such that
$$
u_k \rightharpoonup u \text{ weakly* in } W^1_0L^A(\Omega) \quad \text{and}\quad u_k \to u \text{ strongly in } L^A(\Omega).
$$
From this we  conclude that $\int_\Omega A(|u|)\,dx = \alpha_0$ and hence, it is an admissible competitor for $E(\alpha_0)$, from where
$$
E(\alpha_0) \leq \int_\Omega A(|\nabla u|)\,dx \leq \liminf_{k\to\infty}  \int_\Omega A(|\nabla u_k|)\,dx \le \liminf_{k\to\infty}  E(\alpha_k) + \ve,
$$
where for the second inequality we have used \cite[Lemma 2.2]{GLMS99}. This estimate concludes the proof.
\end{proof}

From the proof of Theorem \ref{teo.cont} one immediately obtains the following information:
\begin{prop} \label{propo.lim}
Let $\alpha_0>0$ and let $\alpha_k\to\alpha_0$ as $k\to\infty$. Let $\{u_k\}_{k\in \N}$ be a sequence of minimizers of $E(\alpha_k)$. Then, $\{u_k\}_{k\in\N}$ is weak* precompact in $W^1_0L^A(\Omega)$ and each accumulation point $u\in W^1_0L^A(\Omega)$ of the sequence in the weak*-topology is a minimizer for $E(\alpha_0)$.

Moreover, if $u_{k_j}\rightharpoonup u $ weakly* in $W^1_0 L^A(\Omega)$, then
$$
\int_\Omega A(|\nabla u|)\, dx = \lim_{j\to\infty} \int_\Omega A(|\nabla u_{k_j}|)\, dx.
$$
\end{prop}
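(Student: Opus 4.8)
The plan is to mimic the proof of Theorem~\ref{teo.cont}, using in addition that the $u_k$ are \emph{exact} minimizers, so that $\int_\Omega A(|\nabla u_k|)\,dx = E(\alpha_k)$ for every $k$.

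First I would record that $\{u_k\}_{k\in\N}$ is bounded in $W^1_0L^A(\Omega)$: since $\int_\Omega A(|u_k|)\,dx = \alpha_k$ is bounded and, by Theorem~\ref{teo.cont}, $\int_\Omega A(|\nabla u_k|)\,dx = E(\alpha_k)\to E(\alpha_0)$ is bounded as well, Lemma~\ref{cota.prelim} provides uniform bounds for $\|u_k\|_A$ and $\|\nabla u_k\|_A$. Then, exactly as in the proof of Lemma~\ref{lema.lsc}, I would combine this bound with the compact embedding of Proposition~\ref{embeddings} and the weak* sequential precompactness of bounded sets in $W^1_0L^A(\Omega)$ to conclude that every subsequence of $\{u_k\}$ admits a further subsequence $u_{k_j}$ with $u_{k_j}\rightharpoonup u$ weak* in $W^1_0L^A(\Omega)$ and $u_{k_j}\to u$ strongly in $L^A(\Omega)$ for some $u\in W^1_0L^A(\Omega)$. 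This is the asserted weak* precompactness.

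Next, given any weak* accumulation point $u$ --- say $u_{k_j}\rightharpoonup u$ weak* in $W^1_0L^A(\Omega)$ and, after passing to a further subsequence, $u_{k_j}\to u$ strongly in $L^A(\Omega)$ --- I would argue exactly as in the proof of Lemma~\ref{lema.lsc} that the strong convergence forces $\int_\Omega A(|u|)\,dx = \lim_j \alpha_{k_j} = \alpha_0$, so that $u$ is admissible for $E(\alpha_0)$ and hence $\int_\Omega A(|\nabla u|)\,dx \ge E(\alpha_0)$. For the reverse inequality I would invoke the weak* lower semicontinuity of the gradient modular (\cite[Lemma~2.2]{GLMS99}) together with the continuity of $E$:
$$
\int_\Omega A(|\nabla u|)\,dx \le \liminf_{j\to\infty}\int_\Omega A(|\nabla u_{k_j}|)\,dx = \liminf_{j\to\infty} E(\alpha_{k_j}) = E(\alpha_0).
$$
Combining both estimates gives $\int_\Omega A(|\nabla u|)\,dx = E(\alpha_0)$, i.e. $u$ minimizes $E(\alpha_0)$. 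The last assertion then follows at once: if $u_{k_j}\rightharpoonup u$ weak* in $W^1_0L^A(\Omega)$, the above shows that $u$ is a minimizer, so $\int_\Omega A(|\nabla u|)\,dx = E(\alpha_0)$, whereas $\int_\Omega A(|\nabla u_{k_j}|)\,dx = E(\alpha_{k_j})\to E(\alpha_0)$ by Theorem~\ref{teo.cont}; hence $\int_\Omega A(|\nabla u_{k_j}|)\,dx\to\int_\Omega A(|\nabla u|)\,dx$.

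The hard part is the identity $\int_\Omega A(|u|)\,dx = \alpha_0$: since no $\Delta_2$ condition is imposed on $A$, modular convergence is not a formal consequence of $L^A$-norm convergence, so the strong convergence produced by the compact embedding of Proposition~\ref{embeddings} has to be used in an essential way, precisely as in Lemma~\ref{lema.lsc}. A secondary and minor point to make explicit is that $E(\alpha_0)$ equals the constrained minimum of $\int_\Omega A(|\nabla v|)\,dx$ over $v\in W^1_0L^A(\Omega)$ with $\int_\Omega A(|v|)\,dx = \alpha_0$ --- part of the content of \cite{GLMS99} underlying Proposition~\ref{mini.prop} --- which is what legitimizes the inequality $\int_\Omega A(|\nabla u|)\,dx\ge E(\alpha_0)$ for the limit function $u$.
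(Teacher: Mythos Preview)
Your proposal is correct and follows exactly the route the paper intends: the paper's ``proof'' of Proposition~\ref{propo.lim} is merely the remark that it is immediate from the proof of Theorem~\ref{teo.cont}, and what you wrote is precisely a spelled-out version of that, combining the boundedness/compactness argument of Lemma~\ref{lema.lsc} with the continuity of $E$ already established. The subtle point you flag---that strong $L^A$ convergence must yield $\int_\Omega A(|u|)\,dx=\alpha_0$ without any $\Delta_2$ assumption---is present (and equally unproved in detail) in the paper's own argument for Lemma~\ref{lema.lsc}, so you are matching the paper's level of rigor there as well.
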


Now, we analyze the differentiability of $E(\alpha)$. We stress that no $\Delta_2-$condition is imposed on $A$ nor $\bar A$. In fact, we prove the following:
\begin{thm}\label{E.Lip}
Let $A$ be a Young function and let $E(\alpha)$ be the energy function defined in \eqref{minimi}. Then $E(\alpha)$ is Lipschitz continuous. Moreover, 
$$
0\le \liminf_{\alpha\to\alpha_0} \frac{E(\alpha)-E(\alpha_0)}{\alpha-\alpha_0} \le \limsup_{\alpha\to\alpha_0} \frac{E(\alpha)-E(\alpha_0)}{\alpha-\alpha_0}\le \lambda(\alpha).
$$
\end{thm}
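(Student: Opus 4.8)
The plan is to combine monotonicity of $E$ — which, together with the trivial $\liminf\le\limsup$, yields the first two inequalities — with a dilation argument for the upper bound. First, $E$ is nondecreasing: if $0<\beta<\gamma$ and $u_\gamma$ minimizes $E(\gamma)$, choose $s\in(0,1)$ with $\int_\Omega A(s|u_\gamma|)\,dx=\beta$ (possible since $r\mapsto\int_\Omega A(r|u_\gamma|)\,dx$ is continuous, nondecreasing and equals $\gamma$ at $r=1$); then $su_\gamma$ is admissible for $E(\beta)$ and the first inequality in \eqref{convexidad} gives $E(\beta)\le\int_\Omega A(s|\nabla u_\gamma|)\,dx\le sE(\gamma)\le E(\gamma)$. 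Hence every difference quotient of $E$ is nonnegative.

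For the upper bound, fix $\alpha_0>0$, let $u_0$ be a minimizer of $E(\alpha_0)$ — solving \eqref{form.debil} with multiplier $\lambda(\alpha_0)$ by Proposition \ref{autov.prop} — and record the identity
\[
\int_\Omega a(|\nabla u_0|)\,|\nabla u_0|\,dx=\lambda(\alpha_0)\int_\Omega a(|u_0|)\,|u_0|\,dx,
\]
obtained by testing \eqref{form.debil} with $v=u_0$ (a density argument, both integrals being finite). For $\alpha$ near $\alpha_0$, pick $r_\alpha>0$ with $\int_\Omega A(r_\alpha|u_0|)\,dx=\alpha$; as in Section \ref{sec.elementary}, $r_\alpha\to1$ as $\alpha\to\alpha_0$ and $r_\alpha u_0$ is admissible for $E(\alpha)$. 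Using $A(rt)-A(t)=\int_1^r a(\sigma t)\,t\,d\sigma$ and monotonicity of $a$, for $\alpha>\alpha_0$ (so $r_\alpha>1$) one obtains
\[
E(\alpha)-E(\alpha_0)\le\int_\Omega\big[A(r_\alpha|\nabla u_0|)-A(|\nabla u_0|)\big]\,dx\le(r_\alpha-1)\int_\Omega a(r_\alpha|\nabla u_0|)\,|\nabla u_0|\,dx,
\]
\[
\alpha-\alpha_0=\int_\Omega\big[A(r_\alpha|u_0|)-A(|u_0|)\big]\,dx\ge(r_\alpha-1)\int_\Omega a(|u_0|)\,|u_0|\,dx.
\]
Dividing and letting $\alpha\downarrow\alpha_0$ (so $r_\alpha\downarrow1$, and $\int_\Omega a(r_\alpha|\nabla u_0|)|\nabla u_0|\,dx\to\int_\Omega a(|\nabla u_0|)|\nabla u_0|\,dx$ by monotone convergence and right continuity of $a$) yields $\limsup_{\alpha\downarrow\alpha_0}\frac{E(\alpha)-E(\alpha_0)}{\alpha-\alpha_0}\le\lambda(\alpha_0)$. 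For $\alpha\uparrow\alpha_0$ the same competitor only produces a lower estimate, so I would run the symmetric argument dilating the minimizer $u_\alpha$ of $E(\alpha)$ up to level $\alpha_0$, invoking Proposition \ref{propo.lim} to pass to the limit $u_\alpha\to u_0$ (again with the dilation factor tending to $1$). Combining the two one-sided bounds gives $\limsup_{\alpha\to\alpha_0}\le\lambda(\alpha_0)$; since the difference quotients are then squeezed between $0$ and a bound locally uniform in $\alpha_0$, $E$ is locally Lipschitz on $(0,\infty)$. (For the $\alpha>\alpha_0$ side one may alternatively perturb $u_0$ by $t\varphi$ with a suitably chosen $\varphi\in C_c^\infty(\Omega)$, expand both integrals to first order in $t$, and use \eqref{form.debil} with $v=\varphi$, which avoids testing against $u_0$.)

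The monotonicity and the convexity/fundamental-theorem-of-calculus sandwich are routine; the technical heart — and the main obstacle — is the passage to the limit in $\int_\Omega a(r|\nabla u|)|\nabla u|\,dx$. This requires: (i) finiteness of $\int_\Omega a(|\nabla u_0|)|\nabla u_0|\,dx$ and $\int_\Omega a(|u_0|)|u_0|\,dx$ and, crucially, that the dilated (or perturbed) competitor still has finite energy in the relevant parameter range — free under a $\Delta_2$ condition but delicate without one, since dilation can carry a function out of the Orlicz class; (ii) the legitimacy of testing \eqref{form.debil}, a priori available only against $C_c^\infty(\Omega)$, with $u_0$; (iii) careful bookkeeping of one-sided limits, $a$ being only nondecreasing and right continuous; and, for the left limit, (iv) the convergence of minimizers and their energies from Proposition \ref{propo.lim}. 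I expect (i) and (ii) to be where the genuine work lies.
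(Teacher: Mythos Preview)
Your approach is essentially the paper's: monotonicity for the lower bound, and dilating the minimizer $u_0$ to a competitor $r_\alpha u_0$ for the upper bound, arriving at the ratio $\int_\Omega a(|\nabla u_0|)|\nabla u_0|\,dx \big/ \int_\Omega a(|u_0|)|u_0|\,dx = \lambda(\alpha_0)$. The only difference is packaging --- the paper obtains $r(\alpha)$ via the Implicit Function Theorem and then Taylor-expands $\alpha\mapsto\int_\Omega A(r(\alpha)|\nabla u_0|)\,dx$ in place of your fundamental-theorem-of-calculus sandwich --- and you are in fact more scrupulous than the paper about the one-sided issue (the paper writes the competitor inequality for both signs of $\alpha-\alpha_0$ without comment, while your proposed left-limit fix, dilating $u_\alpha$ and passing to the limit, is exactly the mechanism the paper deploys in the proof of the subsequent differentiability theorem).
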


\begin{proof}
Observe that the lower bound is immediate, since $E(\alpha)$ is increasing.

Now, Let $\alpha_0>0$ and let $u_0\in W^1_0L^A(\Omega)$ be a minimizer of $E(\alpha_0)$, that is, 
$$
E(\alpha_0)=\int_\Omega A(|\nabla u_0|)\,dx \quad \text{and} \quad \int_\Omega A(|u_0|)\,dx = \alpha_0.
$$
Define $F(r,\alpha) =\int_\Omega A(r |u_0|)\,dx - \alpha$. Observe that
$$
\frac{\partial F}{\partial r}(r, \alpha)= \int_\Omega a(r|u_0|)|u_0|\,dx, \qquad 
\frac{\partial F}{\partial \alpha}(r, \alpha)= -1.
$$
Moreover, since $u_0\neq 0$, $\frac{\partial F}{\partial r}(1,\alpha_0)= \int_\Omega a(|u_0|)|u_0|\,dx>0$ . Then we can apply the Implicit Function Theorem to obtain the existence of a function $r(\alpha)$ defined for $\alpha \in I:=(\alpha_0-\delta, \alpha_0+\delta)$ for some $\delta>0$, for which $r(\alpha_0)=1$, $F(r(\alpha),\alpha)=0$ for any $\alpha\in I$. Moreover,
$$
r'(\alpha_0) = -\frac{\frac{\partial F}{\partial \alpha}(1,\alpha_0)}{\frac{\partial F}{\partial r}(1,\alpha_0)} = \frac{1}{\int_\Omega a(|u_0|)||u_0|\,dx}>0.
$$
Observe that the function $G(\alpha):= \int_\Omega A(r(\alpha)|\nabla u_0|)\, dx$, $\alpha\in I$, near $\alpha_0$ behaves as
\begin{align*}
G(\alpha)&=G(\alpha_0)+ G'(\alpha_0)(\alpha-\alpha_0)+ o(\alpha-\alpha_0)\\
&=E(\alpha_0) + (\alpha-\alpha_0)r'(\alpha_0) \int_\Omega a(|\nabla u_0|)|\nabla u_0| \, dx  + o(\alpha-\alpha_0).
\end{align*}
Since $F(r(\alpha),\alpha)=0$ for any $\alpha\in I$, the function $r(\alpha) u_0$ is admissible in the minimization problem $E(\alpha)$ for any $\alpha\in I$, giving that
$$
\frac{E(\alpha)-E(\alpha_0)}{\alpha-\alpha_0}\leq \frac{G(\alpha)-E(\alpha_0)}{\alpha-\alpha_0} = \frac{\int_\Omega a(|\nabla u_0|)|\nabla u_0|\,dx }{\int_\Omega a(|u_0|)|u_0|\,dx} + o(1) = \lambda(\alpha_0) + o(1),
$$ 
as we wanted to prove.
\end{proof}

In order to obtain the differentiability of the energy function, we need to impose some technical, but somewhat natural additional conditions on $A$.
\begin{thm} \label{teo.dif}
Let $A$ be a Young function such that $t\mapsto a(t)t$ is convex.

Let $E(\alpha)$ be the energy function defined in \eqref{minimi}. Then $E(\alpha)$ is differentiable for any $\alpha>0$. Moreover, 
\begin{align*}
E'(\alpha)=\lambda(\alpha), \qquad \left(\frac{E(\alpha)}{\alpha}\right)'= \frac{1}{\alpha} \left(\lambda(\alpha)-\frac{E(\alpha)}{\alpha}\right),
\end{align*}
where $\lambda(\alpha)$ is the eigenvalue defined in \eqref{autov}.
\end{thm}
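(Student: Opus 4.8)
\emph{Strategy.} The plan is to show that, for an arbitrary minimizer $u_0$ of $E(\alpha_0)$ (with associated multiplier $\lambda(\alpha_0)$ from Proposition \ref{autov.prop}),
$$
\lambda(\alpha_0)\le\liminf_{\alpha\to\alpha_0}\frac{E(\alpha)-E(\alpha_0)}{\alpha-\alpha_0}\le\limsup_{\alpha\to\alpha_0}\frac{E(\alpha)-E(\alpha_0)}{\alpha-\alpha_0}\le\lambda(\alpha_0),
$$
which simultaneously yields that $\lambda(\alpha_0)$ is independent of the chosen minimizer and that $E'(\alpha_0)=\lambda(\alpha_0)$. Write $B(t):=ta(t)$. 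By hypothesis $B$ is convex with $B(0)=0$; hence $B$ is continuous on $\R_+$, so $a(t)=B(t)/t$ is continuous and $A\in C^1$ — this legitimizes the implicit function and mean value computations below. Note also $A(t)\le B(t)=A(t)+\bar A(a(t))$, and testing \eqref{form.debil} with the minimizer itself gives $\lambda(\alpha)=\big(\int_\Omega B(|\nabla u_\alpha|)\,dx\big)\big/\big(\int_\Omega B(|u_\alpha|)\,dx\big)$ for every minimizer $u_\alpha$ of $E(\alpha)$.

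\emph{The two comparisons.} The upper bound is Theorem \ref{E.Lip}: rescaling $u_0$ to level $\alpha$ via the implicit function $r(\alpha)$ solving $\int_\Omega A(r(\alpha)|u_0|)\,dx=\alpha$ gives $E\le G$ near $\alpha_0$, where $G(\alpha):=\int_\Omega A(r(\alpha)|\nabla u_0|)\,dx$, $G(\alpha_0)=E(\alpha_0)$, and $G'(\alpha)=\int_\Omega B(r(\alpha)|\nabla u_0|)\,dx\big/\int_\Omega B(r(\alpha)|u_0|)\,dx\to\lambda(\alpha_0)$ as $\alpha\to\alpha_0$. For the lower bound I would instead rescale the \emph{nearby} minimizers downward: for $\alpha$ near $\alpha_0$ take a minimizer $u_\alpha$ of $E(\alpha)$, let $\rho_\alpha(s)$ solve $\int_\Omega A(\rho_\alpha(s)|u_\alpha|)\,dx=s$ (so $\rho_\alpha(\alpha)=1$), and set $G_\alpha(s):=\int_\Omega A(\rho_\alpha(s)|\nabla u_\alpha|)\,dx$, so $G_\alpha(\alpha)=E(\alpha)$ and $E\le G_\alpha$ near $\alpha$ (finite on the side where $\rho_\alpha(s)\le1$, the integrand being dominated there by $A(|\nabla u_\alpha|)$). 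For $\alpha>\alpha_0$ this yields
$$
\frac{E(\alpha)-E(\alpha_0)}{\alpha-\alpha_0}\ \ge\ \frac{G_\alpha(\alpha)-G_\alpha(\alpha_0)}{\alpha-\alpha_0}\ \ge\ \inf_{s\in(\alpha_0,\alpha)}G_\alpha'(s),\qquad G_\alpha'(s)=\frac{\int_\Omega B(\rho_\alpha(s)|\nabla u_\alpha|)\,dx}{\int_\Omega B(\rho_\alpha(s)|u_\alpha|)\,dx},
$$
with $\rho_\alpha(s)\to1$ as $\alpha\to\alpha_0^+$; for $\alpha<\alpha_0$ the analogous (and easier) estimate comes from rescaling $u_0$ itself down, using the fixed dominating function $B(|\nabla u_0|)\in L^1(\Omega)$ and the continuity of $G'$ at $\alpha_0$. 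The whole argument therefore reduces to the modular continuity
$$
\int_\Omega B(|\nabla u_\alpha|)\,dx\to\int_\Omega B(|\nabla u_0|)\,dx,\qquad \int_\Omega B(|u_\alpha|)\,dx\to\int_\Omega B(|u_0|)\,dx
$$
along any sequence $\alpha\to\alpha_0$, where, by Proposition \ref{propo.lim}, the $u_\alpha$ may be taken to converge weakly$^*$ in $W^1_0L^A(\Omega)$ and strongly in $L^A(\Omega)$ to a minimizer $u_0$ of $E(\alpha_0)$, with $\int_\Omega A(|\nabla u_\alpha|)\,dx\to\int_\Omega A(|\nabla u_0|)\,dx$.

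\emph{The core estimate and conclusion.} This modular continuity is the main obstacle and the exact point where the convexity of $t\mapsto ta(t)$ is used: since no $\Delta_2$-condition is available, $B(|\nabla u_\alpha|)$ cannot be dominated by a fixed integrable function, so a soft dominated convergence argument fails. Convexity of $B$ gives the weak$^*$ lower semicontinuity $\liminf\int_\Omega B(|\nabla u_\alpha|)\,dx\ge\int_\Omega B(|\nabla u_0|)\,dx$ for free; for the matching upper bound I would pass to the limit in the Euler--Lagrange equation \eqref{form.debil} (Proposition \ref{autov.prop}) and run a Minty-type monotonicity argument for the $a$-Laplacian: the fluxes $a(|\nabla u_\alpha|)\tfrac{\nabla u_\alpha}{|\nabla u_\alpha|}$ are bounded in $L^{\bar A}(\Omega)$ (their $\bar A$-modular being $\int_\Omega B(|\nabla u_\alpha|)\,dx-\int_\Omega A(|\nabla u_\alpha|)\,dx$), converge weakly$^*$ to some $\chi$, and monotonicity of $\xi\mapsto a(|\xi|)\tfrac{\xi}{|\xi|}$ forces $\chi=a(|\nabla u_0|)\tfrac{\nabla u_0}{|\nabla u_0|}$; combining this with the identity $B=A+\bar A\circ a$ and $\int_\Omega A(|\nabla u_\alpha|)\,dx\to\int_\Omega A(|\nabla u_0|)\,dx$ upgrades the gradient convergence to the desired modular convergence, and the companion statement for $\int_\Omega B(|u_\alpha|)\,dx$ is deduced analogously from the strong $L^A$-convergence of $u_\alpha$. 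Granting this, $\rho_\alpha(s)\to1$ and $G_\alpha'(s)\to\lambda(\alpha_0)$, the sandwich closes, and $E'(\alpha_0)=\lambda(\alpha_0)$ is well defined; the second formula then follows immediately from the quotient rule, $\big(E(\alpha)/\alpha\big)'=\big(E'(\alpha)\alpha-E(\alpha)\big)/\alpha^2=\tfrac1\alpha\big(\lambda(\alpha)-E(\alpha)/\alpha\big)$.
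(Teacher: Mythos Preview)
Your overall architecture --- rescale minimizers via the implicit function theorem and sandwich the difference quotient --- is exactly the paper's. The divergence is in the ``core estimate''. The paper does \emph{not} establish full modular convergence $\int_\Omega B(|\nabla u_\alpha|)\,dx\to\int_\Omega B(|\nabla u_0|)\,dx$. Instead, after the first-order Taylor expansion of $G_k$ at $\alpha_k$ (so only the value $G_k'(\alpha_k)=\int_\Omega B(|\nabla u_k|)\,dx\big/\int_\Omega B(|u_k|)\,dx$ appears, not an infimum over an interval), it passes to the limit using only the \emph{one-sided} inequality
\[
\liminf_{k\to\infty}\int_\Omega B(|\nabla u_k|)\,dx\ \ge\ \int_\Omega B(|\nabla u_0|)\,dx,
\]
which is precisely weak$^*$ lower semicontinuity of the convex functional $v\mapsto\int_\Omega B(|\nabla v|)\,dx$ (this is the sole place the convexity of $t\mapsto ta(t)$ is invoked, via \cite[Lemma 2.2]{GLMS99}). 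Combined with convergence of the denominator coming from the strong $L^A$-convergence $u_k\to u_0$, this already yields $\liminf_k\big(\int B(|\nabla u_k|)/\int B(|u_k|)\big)\ge\lambda(\alpha_0)$ and closes the sandwich. No Minty-type identification of weak flux limits and no upgrading of gradient convergence is needed.

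Your Minty sketch, besides being unnecessary, has a circularity: to bound the fluxes $a(|\nabla u_\alpha|)\tfrac{\nabla u_\alpha}{|\nabla u_\alpha|}$ in $L^{\bar A}(\Omega)$ you use that their $\bar A$-modular equals $\int_\Omega B(|\nabla u_\alpha|)\,dx-\int_\Omega A(|\nabla u_\alpha|)\,dx$, but boundedness of $\int_\Omega B(|\nabla u_\alpha|)\,dx$ is not yet in hand --- via \eqref{form.debil} it equals $\lambda(\alpha)\int_\Omega B(|u_\alpha|)\,dx$, and without any $\Delta_2$-condition the latter is not a priori controlled by $\int_\Omega A(|u_\alpha|)\,dx=\alpha$. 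The paper sidesteps all of this by asking only for lower semicontinuity on the gradient term.
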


\begin{proof}
In view of Theorem \ref{E.Lip}, it remains to see that
$$
\liminf_{\alpha\to\alpha_0} \frac{E(\alpha)-E(\alpha_0)}{\alpha-\alpha_0}\ge\lambda(\alpha_0).
$$

Let $\{\alpha_k\}_{k\in\N}$ be such that $\alpha_k\to \alpha_0$ when $k\to\infty$, and
$$
\liminf_{\alpha\to \alpha_0} \frac{E(\alpha)-E(\alpha_0)}{\alpha-\alpha_0}=\lim_{k\to \infty} \frac{E(\alpha_k)-E(\alpha_0)}{\alpha_k-\alpha_0}.
$$
Arguiying analogously as in the proof of Theorem \ref{teo.cont}, we can assume that there exists $\{u_k\}_{k\in\N}\subset W^1_0L^A(\Omega)$ such that
$$
E(\alpha_k) = \int_\Omega A(|\nabla u_k|)\, dx,\qquad \int_\Omega A(|u_k|)\, dx = \alpha_k
$$
and that
\begin{equation} \label{converg}
\begin{split}
&u_k \rightharpoonup u \text{ weakly* in } W^1_0L^A(\Omega) \quad \text{ and } \\
&u_k \to u \text{ strongly in } L^A(\Omega) \text{ and pointwisely}.
\end{split}
\end{equation}
Moreover, by  Proposition \ref{propo.lim}, $u\in W^1_0L^A(\Omega)$ is a minimizer for $E(\alpha_0)$.

Now, proceeding as Theorem \ref{E.Lip} for each $k\in\N$ there exists a function $r_k(\alpha)$ defined in $t\in I_k :=(\alpha_k-\delta, \alpha_k+\delta)$ for some $\delta>0$ for which
$$
r_k(\alpha_k)=1, \quad \int_\Omega  A(r_k(\alpha) |u_k|)\,dx=\alpha, \quad r_k'(\alpha_k) = \frac{1}{\int_\Omega a(|u_k|)|u_k|\, dx} > 0.
$$
Hence, testing in $E(\alpha_0)$ with $r_k(\alpha_0) u_k$ and performing a Taylor expansion around $\alpha=\alpha_k$ we get that
\begin{align*}
E(\alpha_0)&\leq \int_\Omega A(r_k(\alpha_0) |\nabla u_k|)\,dx\\ 
&= E(\alpha_k) + (\alpha_0-\alpha_k) r_k'(\alpha_k) \int_\Omega a(|\nabla u_k|)|\nabla u_k|\,dx + o(\alpha_0-\alpha_k).
\end{align*}
Observe that, since $a(t)$ is continuous and $u_k$ is converging pointwisely, it follows that $o(\alpha_0-\alpha_k)$ is independent of $k\in\N$.

Therefore,
\begin{align*}
\lim_{k\to\infty} \frac{E(\alpha_k) - E(\alpha_0)}{\alpha_k-\alpha_0} &\geq \liminf_{k\to\infty}\frac{\int_\Omega a(|\nabla u_k|)|\nabla u_k|\,dx}{\int_\Omega a(|u_k|)|u_k|\,dx}\\
&\ge \frac{\int_\Omega a(|\nabla u|)|\nabla u|\,dx}{\int_\Omega a(|u|)|u|\,dx} = \lambda(\alpha_0),
\end{align*}
where we have used  \cite[Lemma 2.2]{GLMS99} and our convexity assumption on the function $t\mapsto a(t)t$.

Finally, using the expression for $E'(\alpha)$ we get that
\begin{align*}
\left(\frac{E(\alpha)}{\alpha}\right)'&=\frac{1}{\alpha}\left(E'(\alpha)-\frac{E(\alpha)}{\alpha}\right)= \frac{1}{\alpha} \left(\lambda(\alpha)-\frac{E(\alpha)}{\alpha}\right).
\end{align*}
The proof is complete.
\end{proof}

\begin{rem}
Observe that the eigenvalue $\lambda(\alpha)$ and the energy quotient $E(\alpha)/\alpha$ coincide if and only if they are constants. This is exactly the case where $A$ is homogeneous, that is $A(t)=t^p$ for some $p>1$.
\end{rem}

\section{Sharp asymptotic behavior when $\alpha\to 0$ and $\alpha\to\infty$}\label{sec.asymptotic}

In order to analyze the precise behavior at $0$ and at $\infty$, we need to assume that $A\in \Delta_2^i$ at $i=0$ and $i=\infty$ respectively. Moreover, we need to impose the slightly stronger assumption that the limit defining $M_i(t)$ in \eqref{matu} exists.

So, fixed a normalization parameter $\tau>0$ we define the minimization problem for the associated Matuszewska functions,
\begin{align}  \label{ee}
E_i(\tau) &= \inf\left\{\int_\Omega M_i(|\nabla u|)\, dx\colon u\in C_c^\infty(\Omega), \, \int_\Omega M_i(|u|)\, dx = \tau\right\}.
\end{align}

We then obtain the following result.
\begin{thm} \label{teo1}
Let $A$ be a Young function satisfying the $\Delta_2^i$ condition for $i=0$ or $i=\infty$ and assume that the corresponding limit defining $M_i(t)$ in \eqref{matu} exists. Then, it holds that:
$$
\limsup_{\alpha\to i} \frac{E(\alpha)}{\alpha} = \inf_{\tau>0} \frac{E_i(\tau)}{\tau}.
$$
\end{thm}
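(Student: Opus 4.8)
The proof amounts to the two inequalities $\limsup_{\alpha\to i}\frac{E(\alpha)}{\alpha}\le\inf_{\tau>0}\frac{E_i(\tau)}{\tau}$ and $\limsup_{\alpha\to i}\frac{E(\alpha)}{\alpha}\ge\inf_{\tau>0}\frac{E_i(\tau)}{\tau}$. The engine is an elementary remark about the rescaled Young functions $\Phi_r(t):=A(rt)/A(r)$: under the hypotheses ($A\in\Delta_2^i$ and the limit in \eqref{matu} exists) one has $\Phi_r(t)\to M_i(t)$ as $r\to i$, with $M_i(t)=t^{p_i}$, $p_i\in[1,\infty)$, by Proposition \ref{M.potencia}, and this convergence is locally uniform on $(0,\infty)$ with Potter-type two-sided power bounds (see \cite{Maligranda}). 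Combined with the $\Delta_2^i$ condition this yields, for any bounded $w$, a uniform dominating function, and hence by dominated convergence
\begin{equation}\label{lim.mod}
\frac{1}{A(r)}\int_\Omega A(r|w|)\,dx\ \longrightarrow\ \int_\Omega M_i(|w|)\,dx\qquad\text{as }r\to i.
\end{equation}

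\emph{Upper bound.} Fix $\tau>0$, $\ve>0$ and $w\in C_c^\infty(\Omega)$ with $\int_\Omega M_i(|w|)\,dx=\tau$ and $\int_\Omega M_i(|\nabla w|)\,dx\le E_i(\tau)+\ve$. The map $r\mapsto\int_\Omega A(r|w|)\,dx$ is continuous, strictly increasing and runs from $0$ to $+\infty$, so for every $\alpha$ near $i$ there is $r(\alpha)$ with $\int_\Omega A(r(\alpha)|w|)\,dx=\alpha$, and $r(\alpha)\to i$ as $\alpha\to i$. Since $r(\alpha)w$ is admissible for $E(\alpha)$, dividing numerator and denominator by $A(r(\alpha))$ and applying \eqref{lim.mod} gives
\[
\frac{E(\alpha)}{\alpha}\ \le\ \frac{\int_\Omega A(r(\alpha)|\nabla w|)\,dx}{\int_\Omega A(r(\alpha)|w|)\,dx}\ \xrightarrow[\alpha\to i]{}\ \frac{\int_\Omega M_i(|\nabla w|)\,dx}{\tau}\ \le\ \frac{E_i(\tau)+\ve}{\tau},
\]
and letting $\ve\downarrow0$ and then taking the infimum over $\tau$ proves $\limsup_{\alpha\to i}\frac{E(\alpha)}{\alpha}\le\inf_{\tau>0}\frac{E_i(\tau)}{\tau}$.

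\emph{Lower bound.} Pick $\alpha_k\to i$ realizing the $\limsup$ and let $u_k$ be minimizers of $E(\alpha_k)$ (Proposition \ref{mini.prop}); normalize by setting $r_k:=A^{-1}(\alpha_k)\to i$ and $v_k:=u_k/r_k$, so that $A(r_k)^{-1}\int_\Omega A(r_k|v_k|)\,dx=1$ and $A(r_k)^{-1}\int_\Omega A(r_k|\nabla v_k|)\,dx=E(\alpha_k)/\alpha_k\to L:=\limsup_{\alpha\to i}\frac{E(\alpha)}{\alpha}$, which is finite by the previous step. The lower Potter bound forces $\{v_k\}$ to be bounded in $W_0^{1,q}(\Omega)$ for every $q<p_i$, so along a subsequence $v_k\rightharpoonup v$ weakly, $v_k\to v$ strongly in $L^q(\Omega)$ and a.e. Now two passages to the limit finish the argument. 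First, the Sobolev embedding $W_0^{1,q}(\Omega)\hookrightarrow L^{q^*}(\Omega)$ (with $q$ chosen so that $q^*>p_i$) together with the Potter bounds shows that $\{A(r_k)^{-1}A(r_k|v_k|)\}_k$ is uniformly integrable, whence by Vitali's theorem $\int_\Omega|v|^{p_i}\,dx=\lim_k A(r_k)^{-1}\int_\Omega A(r_k|v_k|)\,dx=1$; in particular $v\ne0$. Second, viewing $\xi\mapsto\Phi_{r_k}(|\xi|)$ as convex integrands converging locally uniformly to $|\xi|^{p_i}$, the subgradient inequality tested against smooth compactly supported vector fields, together with $\nabla v_k\rightharpoonup\nabla v$, gives
\[
L\ =\ \lim_k\frac{1}{A(r_k)}\int_\Omega A(r_k|\nabla v_k|)\,dx\ \ge\ \int_\Omega|\nabla v|^{p_i}\,dx\ =\ \int_\Omega M_i(|\nabla v|)\,dx .
\]
Hence $v\in W_0^{1,p_i}(\Omega)\setminus\{0\}$; approximating $v$ by functions of $C_c^\infty(\Omega)$ in the $W^{1,p_i}$-norm and using the very definition of $E_i$ gives $\int_\Omega M_i(|\nabla v|)\,dx\ge\big(\int_\Omega M_i(|v|)\,dx\big)\inf_{\tau>0}\frac{E_i(\tau)}{\tau}=\inf_{\tau>0}\frac{E_i(\tau)}{\tau}$, so that $L\ge\inf_{\tau>0}\frac{E_i(\tau)}{\tau}$. (When $p_i=1$ one runs the same scheme in $BV(\Omega)$.)

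\emph{Main obstacle.} Everything delicate lies in the lower bound, in the control of the rescaled minimizers $v_k=u_k/r_k$: one must (i) rule out loss of mass in the weak limit, i.e.\ secure $\int_\Omega|v|^{p_i}\,dx=1$ rather than merely $\le1$, which is why one needs the slightly supercritical integrability of the $v_k$ coming from the Sobolev embedding (forcing Vitali in place of Fatou) and, in the case $i=0$, a uniform control of the $v_k$ on the range where the $\Delta_2^i$ estimates apply — this is an input from the regularity theory of \eqref{autov}, which under $\Delta_2^i$ is essentially $p_i$-Laplacian regularity on the relevant range, or can be obtained by a truncation argument; and (ii) pass to the limit in the gradient term, where only weak convergence is available, so that convexity and lower semicontinuity with the \emph{varying} integrands $\Phi_{r_k}$ must replace pointwise convergence. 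By contrast, the upper bound is just the scaling computation above combined with the soft fact \eqref{lim.mod}.
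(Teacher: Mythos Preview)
Your upper bound is essentially the paper's: fix a smooth test function, rescale it by $r_\alpha$ to meet the constraint $\int_\Omega A(r_\alpha|w|)\,dx=\alpha$, and pass to the limit in the quotient using dominated convergence on $A(r_\alpha\,\cdot)/A(r_\alpha)\to M_i$.

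For the lower bound, however, you and the paper take entirely different routes. The paper never touches minimizers: it stays at the level of test functions and observes that for \emph{every} $u\in C_c^\infty(\Omega)$ the quantity $\tfrac{1}{\alpha}\int_\Omega A(r_\alpha(u)|\nabla u|)\,dx$ (which, as $u$ ranges, realises exactly $E(\alpha)/\alpha$) converges to the $M_i$-Rayleigh quotient of $u$, which in turn dominates $\inf_\tau E_i(\tau)/\tau$; hence the $\liminf$ inequality. This is a six-line soft argument requiring no compactness, no semicontinuity, and no a~priori estimates on eigenfunctions. Your approach, by contrast, runs a full direct-method program on the rescaled minimizers $v_k=u_k/r_k$: Potter bounds for $W^{1,q}_0$-compactness, Sobolev embedding plus Vitali for mass preservation, and a varying-integrand lower semicontinuity for the gradient term. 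The payoff of your route, when it works, is that it also identifies a limiting extremal $v$; the cost is a substantial technical overhead that the paper simply sidesteps.

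That overhead is where the genuine gaps in your proposal lie, and you flag them yourself without closing them. First, the Potter-type bounds on $\Phi_{r}(t)=A(rt)/A(r)$ hold only on the range where $rt$ is close to $i$; in the case $i=0$ this means $t\lesssim T_0/r_k$, so to get the claimed $W^{1,q}_0$ bound and the uniform integrability you must first exclude that $|v_k|$ or $|\nabla v_k|$ concentrates at scales $\sim 1/r_k$ (where $r_k|v_k|\sim 1$ and $\Delta_2^0$ gives no information). Your appeal to ``regularity theory of \eqref{autov}'' or ``a truncation argument'' is a placeholder, not a proof, and without it step (i) of your ``Main obstacle'' does not go through. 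Second, the lower semicontinuity of $\int_\Omega \Phi_{r_k}(|\nabla v_k|)\,dx$ along weak limits with $k$-dependent convex integrands is a genuine (if standard) lemma that your one-sentence subgradient sketch does not establish. Third, the case $p_i=1$ is not ``the same scheme in $BV$'': the limiting functional is the total variation, the admissible class changes, and the identification of $\inf_\tau E_i(\tau)/\tau$ with an eigenvalue-type quantity needs separate justification. None of these obstructions is fatal to the strategy, but as written the lower bound is a program, not a proof; the paper's argument avoids all three issues by never leaving the class of fixed smooth test functions.
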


\begin{proof}
We work in the case where $i=\infty$, the other case being identical. We will split the proof into two steps.

\emph{Step 1: Upper bound}.

Let $u\in C^\infty_c(\Omega)$ be a fixed function such that $u\neq 0$ and consider the function $\phi = \phi_A^u$ defined in \eqref{def.phi}.  Hence, given $\alpha>0$ there exists a unique $r_\alpha>0$ such that $\phi(r_\alpha)=\alpha$. Since $\phi$ is continuous and $u$ is fixed we have that $r_\alpha\to \infty$ if $\alpha\to \infty$.

Since we can write
$$
\frac{\alpha}{A(r_\alpha)} = \int_\Omega \frac{A(r_\alpha|u|)}{A(r_\alpha)}\, dx,
$$
according to the definition \eqref{matu} and the Dominated Convergence Theorem, we get that
\begin{equation} \label{limite1}
\lim_{\alpha\to\infty} \frac{\alpha}{A(r_\alpha)} = \int_\Omega M_\infty(|u|)\, dx.
\end{equation}

The equality $\phi(r_\alpha)=\alpha$ implies that the function $r_\alpha u$ is admissible in the minimization problem for $E(\alpha)$, therefore
\begin{equation}\label{cota.infty}
\begin{split}
\limsup_{\alpha\to \infty} \frac{E(\alpha)}{\alpha} &\le \lim_{\alpha\to\infty} \frac{1}{\alpha}\int_\Omega A(r_\alpha|\nabla u|)\, dx\\
&= \lim_{\alpha\to\infty} \frac{A(r_\alpha)}{\alpha}\int_\Omega \frac{A(r_\alpha|\nabla u|)}{A(r_\alpha)}\, dx\\
&=\frac{\int_\Omega M_\infty(|\nabla u|)\, dx}{\int_\Omega M_\infty(|u|)\, dx},
\end{split}
\end{equation}
where we have used \eqref{limite1} and  \eqref{matu}

Since $u\in C_c^\infty(\Omega)$ is arbitrary, from \eqref{cota.infty} it is deduced that
$$
\limsup_{\alpha\to\infty} \frac{E(\alpha)}{\alpha} \le \inf_{\tau>0} \frac{E_\infty(\tau)}{\tau}
$$
as needed.

\medskip

\emph{Step 2: Lower bound}.
Let us derive the lower bound for the ratio $E(\alpha)/\alpha$. For this, let $u\in C^\infty_c(\Omega)$ be fixed, and as before, let $r_\alpha>0$ be such that $\phi(r_\alpha)=\alpha$.

For $\ve>0$ we have that
$$
\inf_{\tau>0} \frac{E_\infty(\tau)}{\tau} - \ve \le  \frac{\int_\Omega M_\infty(|\nabla u|)\, dx}{\int_\Omega M_\infty(|u|)\, dx}-\ve.
$$
Then, according to \eqref{matu}, there exists $\alpha_\infty = \alpha_\infty(\ve, u)$ such that for $\alpha>\alpha_\infty$,
$$
\frac{\int_\Omega M_\infty(|\nabla u|)\, dx}{\int_\Omega M_\infty(|u|)\, dx}-\ve \le \frac{\int_\Omega \frac{A(r_\alpha|\nabla u|)}{A(r_\alpha)}\, dx}{\int_\Omega \frac{A(r_\alpha|u|)}{A(r_\alpha)}\, dx} = \frac{1}{\alpha}\int_\Omega A(r_\alpha|\nabla u|)\, dx.
$$
Taking infimum over $u\in C^\infty_c(\Omega)$, we conclude that
$$
\inf_{\tau>0} \frac{E_\infty(\tau)}{\tau} - \ve \le \frac{E(\alpha)}{\alpha}\quad \text{for } \alpha>\alpha_\infty,
$$
from where we deduce that
$$
\inf_{\tau>0} \frac{E_\infty(\tau)}{\tau} \le \liminf_{\alpha\to \infty} \frac{E(\alpha)}{\alpha}
$$
and this concludes the proof.
\end{proof}

Lemma \ref{lema.usc}, and Theorem \ref{teo1} lead to the uniform boundedness of the ratio $E(\alpha)/\alpha$:
\begin{cor}
It holds that
$$
\sup_{\alpha>0} \frac{E(\alpha)}{\alpha} <\infty.
$$
\end{cor}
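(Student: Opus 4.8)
The plan is to combine the two results cited just before the statement. From Lemma~\ref{lema.usc} we know that $E$ is upper semicontinuous on $(0,\infty)$; in particular $E$ is locally bounded, so the ratio $E(\alpha)/\alpha$ is bounded on every compact subinterval $[\delta,1/\delta]$ of $(0,\infty)$. Thus the only way the supremum could fail to be finite is through blow-up as $\alpha\to 0^+$ or as $\alpha\to\infty$. Both of these are ruled out by Theorem~\ref{teo1}, which asserts (under the $\Delta_2^i$ assumption at the relevant endpoint, which is in force in this section) that
$$
\limsup_{\alpha\to i}\frac{E(\alpha)}{\alpha}=\inf_{\tau>0}\frac{E_i(\tau)}{\tau}<\infty
$$
for $i=0,\infty$, the finiteness of the right-hand side being clear since $E_i(\tau)\le\int_\Omega M_i(|\nabla u|)\,dx<\infty$ for any fixed admissible $u$ (here $M_i(t)<\infty$ for all $t$ because $A\in\Delta_2^i$).

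First I would fix an arbitrary nonzero $u\in C_c^\infty(\Omega)$ and normalize $\int_\Omega A(|u|)\,dx=1$, and set $\tau_0:=\int_\Omega M_i(|u|)\,dx\in(0,\infty)$; then $E_i(\tau_0)/\tau_0\le \int_\Omega M_i(|\nabla u|)\,dx/\tau_0<\infty$, so $\inf_{\tau>0}E_i(\tau)/\tau<\infty$ for $i=0,\infty$. Next I would invoke Theorem~\ref{teo1} at both endpoints to get constants $\alpha_0<\alpha_1$ in $(0,\infty)$ and a constant $C_1<\infty$ with $E(\alpha)/\alpha\le C_1$ for $\alpha\in(0,\alpha_0)\cup(\alpha_1,\infty)$. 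Finally, on the remaining compact interval $[\alpha_0,\alpha_1]$, upper semicontinuity of $E$ (Lemma~\ref{lema.usc}) gives $\sup_{[\alpha_0,\alpha_1]}E<\infty$, hence $\sup_{[\alpha_0,\alpha_1]}E(\alpha)/\alpha\le \alpha_0^{-1}\sup_{[\alpha_0,\alpha_1]}E<\infty$. Taking the maximum of the three bounds yields $\sup_{\alpha>0}E(\alpha)/\alpha<\infty$.

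There is no real obstacle here — the corollary is a bookkeeping consequence of the two preceding results, and the only minor point worth stating carefully is that the $\limsup$ statements in Theorem~\ref{teo1} control $E(\alpha)/\alpha$ only for $\alpha$ near the endpoints, which is exactly why one still needs the local boundedness coming from Lemma~\ref{lema.usc} to cover the middle range. One should also note in passing that an upper semicontinuous function on a compact interval attains its maximum and is therefore bounded above there, which is the standard fact being used.
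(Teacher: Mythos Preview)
Your proposal is correct and follows exactly the approach indicated by the paper, which simply asserts that the corollary is a consequence of Lemma~\ref{lema.usc} and Theorem~\ref{teo1} without writing out any further details. Your expansion---local boundedness on compact intervals from upper semicontinuity, plus finiteness of the endpoint $\limsup$'s from Theorem~\ref{teo1}---is precisely the intended argument.
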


As a direct consequence, we have the following.
\begin{cor}
With the same assumptions of Theorem \ref{teo1}, let $p_i$ be the exponent given by Proposition \ref{M.potencia}. Then, for $i=0$ or $i=\infty$,
$$
\lim_{\alpha\to i} \frac{E(\alpha)}{\alpha} = \lambda_{p_i},
$$
where $\lambda_p$ denotes the first eigenvalue of the $p-$Laplacian with Dirichlet boundary conditions in $\Omega$.
\end{cor}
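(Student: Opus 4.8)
The plan is to combine Theorem \ref{teo1} with the explicit form of the Matuszewska function furnished by Proposition \ref{M.potencia}, after which the statement reduces to a homogeneity computation. First I would invoke Proposition \ref{M.potencia}: under the standing hypotheses of Theorem \ref{teo1} ($A\in\Delta_2^i$ and the limit defining $M_i$ in \eqref{matu} exists), there is $p_i\in[1,\infty)$ with $M_i(t)=t^{p_i}$. Substituting this into the definition \eqref{ee}, the auxiliary minimization problem becomes the constrained Rayleigh problem for the $p_i$-Dirichlet energy,
$$
E_i(\tau) = \inf\left\{\int_\Omega |\nabla u|^{p_i}\, dx\colon u\in C^\infty_c(\Omega),\ \int_\Omega |u|^{p_i}\, dx = \tau\right\}.
$$

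Next I would exploit the homogeneity of this problem. For $0\neq u\in C^\infty_c(\Omega)$ and $t>0$, replacing $u$ by $tu$ multiplies both $\int_\Omega |\nabla u|^{p_i}\, dx$ and $\int_\Omega |u|^{p_i}\, dx$ by $t^{p_i}$; since $C^\infty_c(\Omega)$ is invariant under this scaling, choosing $t$ so that the constraint holds gives $E_i(\tau)=\tau\,E_i(1)$ for every $\tau>0$, and
$$
E_i(1)=\inf_{0\neq u\in C^\infty_c(\Omega)} \frac{\int_\Omega |\nabla u|^{p_i}\, dx}{\int_\Omega |u|^{p_i}\, dx}=\lambda_{p_i},
$$
which is exactly the variational characterization of the first Dirichlet eigenvalue of the $p_i$-Laplacian (density of $C^\infty_c(\Omega)$ in $W^{1,p_i}_0(\Omega)$ shows the infimum is the same as the usual one). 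Hence $\inf_{\tau>0} E_i(\tau)/\tau=\lambda_{p_i}$.

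Finally I would upgrade the $\limsup$ in Theorem \ref{teo1} to a genuine limit. Inspecting its proof, Step~1 gives $\limsup_{\alpha\to i} E(\alpha)/\alpha\le \inf_{\tau>0} E_i(\tau)/\tau$, while Step~2 gives $\inf_{\tau>0} E_i(\tau)/\tau\le\liminf_{\alpha\to i} E(\alpha)/\alpha$; together these force $\liminf$ and $\limsup$ to coincide, so $\lim_{\alpha\to i} E(\alpha)/\alpha$ exists and equals $\inf_{\tau>0} E_i(\tau)/\tau=\lambda_{p_i}$. I do not expect a serious obstacle: the only points requiring care are that the scaling argument stays within the admissible class (immediate) and that $p_i$ may a priori equal $1$, in which case $\lambda_{p_i}$ is to be read as the infimum of the Rayleigh quotient above; the identity is unaffected. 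The substance of the result lies entirely in Theorem \ref{teo1} and Proposition \ref{M.potencia}, so the corollary is essentially a repackaging via homogeneity.
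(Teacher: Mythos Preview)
Your proposal is correct and matches the paper's intended argument; the paper gives no proof at all, labeling the corollary ``a direct consequence'' of Theorem~\ref{teo1} and Proposition~\ref{M.potencia}, and your write-up spells out exactly that deduction via homogeneity. Your observation that the proof of Theorem~\ref{teo1} already establishes both the $\limsup$ upper bound and the $\liminf$ lower bound (so that the full limit exists despite the theorem being stated only for $\limsup$) is the one nontrivial point, and you handle it correctly.
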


\section{The behavior of $E(\alpha)/\alpha$ without the $\Delta_2-$condition}\label{sec.not.delta2}
In this section we explore the limit behavior of the energy quotient $E(\alpha)/\alpha$ in the case where the Young function $A$ does not satisfy the $\Delta_2^i$ condition for $i=0$ or $i=\infty$.

We will assume the stronger condition \eqref{s.not.delta2}, and hence, by Proposition \ref{Minf.trivial}, the Matuszewska function $M_i(t)$ is trivial. That is
$$
\lim_{s\to i} \frac{A(ts)}{A(s)} = M_i(t) = \begin{cases}
0 & \text{if } 0\le t<1\\
1 & \text{if } t=1\\
\infty & \text{if } t>1.
\end{cases}
$$

In this case, we have the following result.
\begin{thm}\label{lim.sin.delta2}
Let $A$ be a Young function satisfying \eqref{s.not.delta2} for $i=0$ or $i=\infty$. Then, if $\Omega\subset \R^n$ has inner radius larger that 1,
$$
\lim_{\alpha\to i} \frac{E(\alpha)}{\alpha} = 0.
$$
\end{thm}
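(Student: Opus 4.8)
The plan is to exhibit, for every value of the normalization $\alpha$, an explicit admissible competitor built from a single fixed test function, and to show that its Rayleigh-type quotient goes to $0$. Since $E(\alpha)\ge 0$, it suffices to prove $\limsup_{\alpha\to i}E(\alpha)/\alpha\le 0$.

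First I would construct the test function, and this is the only place where the hypothesis on the inner radius enters. Fix $x_0\in\Omega$ and radii $0<a<b$ with $b-a>1$ and $\overline{B_b(x_0)}\subset\Omega$. Take $u\in C_c^\infty(\Omega)$ radially nonincreasing, equal to $1$ on $\overline{B_a(x_0)}$, equal to $0$ outside $B_b(x_0)$, with $0\le u\le 1$ and $\|\nabla u\|_\infty<1$; the strict inequality is possible precisely because the transition from the value $1$ to the value $0$ is spread over a distance $b-a>1$. Put $P:=B_a(x_0)$, so $|P|>0$, $\{u=1\}=\overline P$, $0\le u<1$ on $\Omega\setminus\overline P$, and $|\nabla u(x)|<1$ for every $x$. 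Keeping $\|\nabla u\|_\infty<1$ strict while retaining a genuine plateau at height $1$ is exactly what an inner radius larger than $1$ buys.

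Next, for fixed $\alpha>0$ let $\phi(r)=\int_\Omega A(r|u|)\,dx$, which is continuous, strictly increasing, with $\phi(0)=0$ and $\phi(r)\ge|P|A(r)\to\infty$. Hence there is a unique $r_\alpha$ with $\phi(r_\alpha)=\alpha$, and $r_\alpha\to i$ as $\alpha\to i$ (for $i=0$ note that $A(r|u|)\le A(1)$ is integrable on the bounded set $\Omega$). Since $r_\alpha u$ is admissible in $E(\alpha)$,
\[
\frac{E(\alpha)}{\alpha}\le\frac{\int_\Omega A(r_\alpha|\nabla u|)\,dx}{\int_\Omega A(r_\alpha|u|)\,dx}=\frac{\int_\Omega \tfrac{A(r_\alpha|\nabla u|)}{A(r_\alpha)}\,dx}{\int_\Omega \tfrac{A(r_\alpha|u|)}{A(r_\alpha)}\,dx}.
\]

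Finally I would let $\alpha\to i$ (equivalently $r_\alpha\to i$) in the numerator and denominator separately, by dominated convergence. For the numerator, \eqref{convexidad} gives $A(r_\alpha|\nabla u|)/A(r_\alpha)\le|\nabla u|\in L^1(\Omega)$, and since $|\nabla u(x)|<1$ everywhere, Proposition \ref{Minf.trivial} (which applies because $A$ satisfies \eqref{s.not.delta2}) yields $A(r_\alpha|\nabla u(x)|)/A(r_\alpha)\to M_i(|\nabla u(x)|)=0$ pointwise, so the numerator tends to $0$. For the denominator, $A(r_\alpha|u|)/A(r_\alpha)\le 1\in L^1(\Omega)$ because $0\le u\le 1$, while pointwise $A(r_\alpha|u(x)|)/A(r_\alpha)\to M_i(|u(x)|)$, which equals $1$ on $P$ and $0$ on $\Omega\setminus\overline P$; hence the denominator tends to $|P|>0$. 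Therefore $E(\alpha)/\alpha\to 0$. The subtle points are the strictness $\|\nabla u\|_\infty<1$ in the construction and the fact that one needs \eqref{s.not.delta2}, not merely $A\notin\Delta_2^i$: it is \eqref{s.not.delta2} that turns the $\limsup$ in the definition of $M_i$ into an honest limit equal to $0$ on $(0,1)$, which is what justifies the pointwise convergences above.
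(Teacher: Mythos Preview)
Your proof is correct and follows essentially the same route as the paper's: both construct a single test function $u$ with a plateau at height $1$ and $\|\nabla u\|_\infty<1$ (which is where the inner-radius hypothesis enters), rescale it by $r_\alpha$ to make it admissible, and then pass to the limit in the resulting Rayleigh quotient using Proposition~\ref{Minf.trivial}. Your write-up is in fact more explicit than the paper's in a couple of places---you spell out the dominating functions for the dominated convergence step (via \eqref{convexidad}) and you correctly flag that it is the stronger hypothesis \eqref{s.not.delta2}, not merely $A\notin\Delta_2^i$, that forces $M_i\equiv 0$ on $(0,1)$ and hence gives honest pointwise limits rather than $\limsup$'s.
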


\begin{proof}
Recall that the inner radius of $\Omega$ is defined as
$$
R_\Omega = \sup\{r>0\colon B_r(x)\subset\Omega \text{ for some } x\in\Omega\}.
$$
If $R_\Omega>1$, then there exists $r>0$, $x_0\in\Omega$ with $B_r(x_0)=B_r\subset \Omega$ and $u\in C^\infty_c(\Omega)$ such that
$$
u(x)=1 \text{ for }x\in B_r,\quad 0\le u(x)\le 1 \quad \text{and}\quad |\nabla u(x)|< 1 \text{ for } x\in \Omega.
$$
Then, as usual, for $\alpha>0$ we define $r_\alpha$ such that $\phi(r_\alpha)=\phi_A^u(r_\alpha)=\alpha$ and therefore
$$
\frac{E(\alpha)}{\alpha}\le \frac{A(r_\alpha)}{\alpha}\int_\Omega \frac{A(r_\alpha |\nabla u|)}{A(r_\alpha)}\,dx = \frac{\int_\Omega \frac{A(r_\alpha |\nabla u|)}{A(r_\alpha)}\,dx}{\int_\Omega \frac{A(r_\alpha |u|)}{A(r_\alpha)}\,dx}.
$$
But, since $r_\alpha\to i$ as $\alpha\to i$, it follows from Proposition \ref{Minf.trivial} that
$$
\int_\Omega \frac{A(r_\alpha |u|)}{A(r_\alpha)}\,dx\to |\{u=1\}| = |B_r|>0\quad\text{and}\quad \int_\Omega \frac{A(r_\alpha |\nabla u|)}{A(r_\alpha)}\,dx\to 0,
$$
and the proof is complete.

\end{proof}

\section{Nonlocal case}\label{sec.nonlocal}

In this section we analyze the problem of the behavior of the energy function for the nonlocal case $E^s(\alpha)$ defined in \eqref{minimi.nl.intro}.

All the program developed in Sections \ref{sec.elementary} to \ref{sec.asymptotic} can be carried out without any change so we just include the results leaving the details to the interested reader. 

The results obtained in this case can be summarized in the following theorem

\begin{thm}\label{nonlocal.case}
Let $A$ be a Young function. Then the nonlocal energy function $E^s(\alpha)$ defined in \eqref{minimi.nl.intro} is Lipschitz continuous. If in addition the function $t\mapsto ta(t)$ is convex, then $E^s(\alpha)$ is differentiable and
$$
\frac{d}{d\alpha} E^s(\alpha) = \lambda^s(\alpha),\qquad \frac{d}{d\alpha} \left(\frac{E^s(\alpha)}{\alpha}\right) = \frac{1}{\alpha}\left(\lambda^s(\alpha) - \frac{E^s(\alpha)}{\alpha}\right),
$$
where $\lambda^s(\alpha)$ is the eigenvalue defined in \eqref{autov.nl.intro}.

Moreover, when $A\in \Delta_s^i$ for $i=0$ or $i=\infty$, and the limit in \eqref{matu} exists, then
$$
\lim_{\alpha\to i} \frac{E^s(\alpha)}{\alpha} = \inf_{\tau>0} \frac{E^s_i(\tau)}{\tau}
$$
where
$$
E^s_i(\tau) = \inf\left\{\iint_{\R^{2n}} M_i(|D^s u|)\, d\nu_n \colon u\in C^\infty_c(\Omega),\ \int_\Omega M_i(|u|)\, dx = \tau\right\}.
$$
\end{thm}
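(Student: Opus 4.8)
The plan is to transpose, essentially verbatim, the proofs of Theorems \ref{teo.cont}, \ref{E.Lip}, \ref{teo.dif} and \ref{teo1} to the nonlocal setting: one replaces the local Dirichlet modular $\int_\Omega A(|\nabla u|)\,dx$ by the nonlocal one $\iint_{\R^{2n}} A(|D^s u|)\,d\nu_n$, the space $W^1_0 L^A(\Omega)$ by $W^s_0 L^A(\Omega)$, and the local tools by their fractional counterparts: the existence of minimizers together with the weak eigenvalue equation from Proposition \ref{mini.prop.s}, the compact embedding $W^s_0 L^A(\Omega)\subset L^A(\Omega)$ from Proposition \ref{embeddings-s}, and the weak lower semicontinuity of the modular $u\mapsto \iint_{\R^{2n}} \Phi(|D^s u|)\,d\nu_n$ for a convex $\Phi$ (the fractional analogue of \cite[Lemma 2.2]{GLMS99}, available in \cite{SV22, FBSp24}). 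Note that the normalization constraint $\int_\Omega A(|u|)\,dx = \alpha$ is unchanged, so the auxiliary function $\phi_A^u$ of \eqref{def.phi} and the implicit-function construction of $r(\alpha)$ in the proof of Theorem \ref{E.Lip} carry over word for word.

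For continuity and the Lipschitz bound one repeats Lemmas \ref{lema.usc}, \ref{lema.lsc} and Theorem \ref{E.Lip}. Upper semicontinuity: a near-minimizer $u_0\in C_c^\infty(\Omega)$ for $E^s(\alpha_0)$ gives the admissible competitor $r_\alpha u_0$ with $\phi_A^{u_0}(r_\alpha)=\alpha$, so $r_\alpha\to 1$, and dominated convergence (all relevant modulars being finite since $u_0\in C_c^\infty(\Omega)$) yields $\limsup_{\alpha\to\alpha_0}E^s(\alpha)\le \iint A(|D^s u_0|)\,d\nu_n\le E^s(\alpha_0)+\ve$. Lower semicontinuity: a near-minimizing sequence $u_k$ for $E^s(\alpha_k)$, $\alpha_k\to\alpha_0$, is bounded in $W^s_0 L^A(\Omega)$ by Lemma \ref{cota.prelim}, so Proposition \ref{embeddings-s} gives a subsequence converging weakly-$*$ in $W^s_0 L^A(\Omega)$ and strongly in $L^A(\Omega)$ to some $u$ with $\int_\Omega A(|u|)\,dx=\alpha_0$, and weak lower semicontinuity of the nonlocal modular gives $E^s(\alpha_0)\le\iint A(|D^s u|)\,d\nu_n\le\liminf_k E^s(\alpha_k)$. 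Finally, with $u_0$ a minimizer of $E^s(\alpha_0)$, applying the implicit function theorem to $F(r,\alpha)=\int_\Omega A(r|u_0|)\,dx-\alpha$ produces $r(\alpha)$ with $r(\alpha_0)=1$ and $r'(\alpha_0)=\big(\int_\Omega a(|u_0|)|u_0|\,dx\big)^{-1}$; testing $E^s(\alpha)$ with the admissible $r(\alpha)u_0$ and Taylor-expanding gives the slope $\frac{\iint a(|D^s u_0|)|D^s u_0|\,d\nu_n}{\int_\Omega a(|u_0|)|u_0|\,dx}+o(1)$, which equals $\lambda^s(\alpha_0)+o(1)$ after testing the weak equation of Proposition \ref{mini.prop.s} with $v=u_0$. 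Hence $E^s$ is Lipschitz and $\limsup_{\alpha\to\alpha_0}\frac{E^s(\alpha)-E^s(\alpha_0)}{\alpha-\alpha_0}\le\lambda^s(\alpha_0)$.

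For differentiability one repeats Theorem \ref{teo.dif}: choose $\alpha_k\to\alpha_0$ realizing the $\liminf$ of the difference quotient, extract from the minimizers $u_k$ of $E^s(\alpha_k)$ a subsequence converging weakly-$*$, strongly in $L^A(\Omega)$ and pointwise to a minimizer $u$ of $E^s(\alpha_0)$ (the nonlocal analogue of Proposition \ref{propo.lim}), build $r_k(\alpha)$ as above, test $E^s(\alpha_0)$ with $r_k(\alpha_0)u_k$, and Taylor-expand around $\alpha=\alpha_k$ with a remainder uniform in $k$ (by pointwise convergence and continuity of $a$). This gives $\liminf_k\frac{E^s(\alpha_k)-E^s(\alpha_0)}{\alpha_k-\alpha_0}\ge\liminf_k\frac{\iint a(|D^su_k|)|D^su_k|\,d\nu_n}{\int_\Omega a(|u_k|)|u_k|\,dx}\ge\lambda^s(\alpha_0)$, the last step using the convexity of $t\mapsto ta(t)$ and weak lower semicontinuity of the modular attached to it; together with the previous paragraph this proves $(E^s)'(\alpha)=\lambda^s(\alpha)$, and the formula for $(E^s(\alpha)/\alpha)'$ is then immediate. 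For the asymptotics one repeats the two steps of Theorem \ref{teo1} with $i=\infty$ (the case $i=0$ being identical): fix $u\in C_c^\infty(\Omega)$ and $r_\alpha$ with $\phi_A^u(r_\alpha)=\alpha$ (now $r_\alpha\to\infty$); from $\frac{\alpha}{A(r_\alpha)}=\int_\Omega\frac{A(r_\alpha|u|)}{A(r_\alpha)}\,dx$ and $\frac1\alpha\iint A(r_\alpha|D^su|)\,d\nu_n=\frac{A(r_\alpha)}{\alpha}\iint\frac{A(r_\alpha|D^su|)}{A(r_\alpha)}\,d\nu_n$, the $\Delta_2^\infty$ bound of Lemma \ref{est.A} (which gives $\frac{A(r_\alpha t)}{A(r_\alpha)}\le\max\{t,t^p\}$ for $r_\alpha$ large) provides a $\nu_n$-integrable domination, so dominated convergence and \eqref{matu} yield $\limsup_{\alpha\to\infty}\frac{E^s(\alpha)}{\alpha}\le\frac{\iint M_\infty(|D^su|)\,d\nu_n}{\int_\Omega M_\infty(|u|)\,dx}$; taking the infimum over $u$ gives the inequality $\le\inf_{\tau>0}E^s_\infty(\tau)/\tau$, while the reverse inequality $\inf_{\tau>0}E^s_\infty(\tau)/\tau\le\liminf_{\alpha\to\infty}\frac{E^s(\alpha)}{\alpha}$ is the same $\ve$-argument as in Step~2 of Theorem \ref{teo1}. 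The two bounds force the limit to exist and equal $\inf_{\tau>0}E^s_\infty(\tau)/\tau$.

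The only genuinely new point compared with the local proofs is justifying the passage of the various limits inside the double integral over $\R^{2n}$ against the singular measure $d\nu_n$: one must check that $\max\{|D^su|,|D^su|^p\}\in L^1(\R^{2n},d\nu_n)$ for $u\in C_c^\infty(\Omega)$, and that the relevant families (namely $\{A(r_\alpha|D^su|)/A(r_\alpha)\}$ in the asymptotic step and $\{a(|D^su_k|)|D^su_k|\}$ in the differentiability step) are uniformly $\nu_n$-integrable, together with the fractional weak lower semicontinuity of the modulars. All of these ingredients are established in \cite{SV22, FBSp24}, so no further difficulty arises and the argument closes with only notational changes.
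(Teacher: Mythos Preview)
Your proposal is correct and follows exactly the approach the paper intends: the paper itself gives no proof, stating only that ``all the program developed in Sections \ref{sec.elementary} to \ref{sec.asymptotic} can be carried out without any change'' and ``leaving the details to the interested reader.'' You have supplied precisely those details, correctly identifying the fractional substitutes (Propositions \ref{embeddings-s} and \ref{mini.prop.s}, and the weak lower semicontinuity of the nonlocal modular from \cite{SV22, FBSp24}) and flagging the only genuinely new technical point, namely the $\nu_n$-integrability needed for dominated convergence in the double integral.
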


\section*{Acknowledgements}
This work was partially supported by ANPCyT under grant PICT 2019-3837 and by CONICET under grant  PIP 11220150100032CO. Both authors are members of CONICET and are grateful for the support.

\bibliographystyle{amsplain}
\bibliography{Biblio}

\end{document}